\def\cfk{{\textrm{CFK}}}
\newcommand{\spinc}{\ifmmode{{\mathfrak s}}\else{${\mathfrak s}$\ }\fi}
\newcommand{\spinct}{\ifmmode{{\mathfrak t}}\else{${\mathfrak t}$\ }\fi}
\newcommand{\spincw}{\ifmmode{{\mathfrak w}}\else{${\mathfrak w}$\ }\fi}
\def\Z{\mathbb Z}
\def\cfki{\cfk^\infty(K)}
\def\U{\Upsilon}
\newtheorem{theorem}{Theorem}[section]
\newtheorem*{theorem*}{Theorem}
\newtheorem*{notation*}{Notation and Conventions}
\newtheorem{lemma}[theorem]{Lemma}
\newtheorem{proposition}[theorem]{Proposition}
\newtheorem{conjecture}[theorem]{Conjecture}
\newtheorem{corollary}[theorem]{Corollary}
\theoremstyle{definition}
\newtheorem{example}[theorem]{Example}
\theoremstyle{remark}
\newtheorem*{ack}{Acknowledgements}
\numberwithin{equation}{section}
\def\U{\Upsilon}
\begin{document}
\title[Concordances from differences of torus knots to $L$--space knots]{Concordances from differences of torus knots to $L$--space knots}
\author{Samantha Allen}
\address{Samantha Allen: Department of Mathematics, Dartmouth College, Hanover, NH 03755 }
\email{samantha.g.allen@dartmouth.edu}

\begin{abstract} 
It is known that connected sums of positive torus knots are not concordant to $L$--space knots.  Here we consider differences of torus knots.  The main result states that the subgroup of the concordance group generated by two positive torus knots contains no nontrivial $L$--space knots other than the torus knots themselves.  Generalizations to subgroups generated by more than two torus knots are also considered.
\end{abstract}

\maketitle



\section{Introduction}

An important theme in knot concordance theory has been how the three-dimensional properties of knots affect the knot concordance group.  For instance, in one direction we have the results of Kirby-Lickorish and Myers \cite{kirby-lickorish, myers} that every knot concordance class is represented by a (prime) hyperbolic knot.  In the other direction, there are the results of Litherland \cite{litherland} that positive torus knots are linearly independent in the concordance group and that the quotient of the concordance group by the subgroup generated by torus knots is infinitely generated.

Heegaard Floer theory has provided powerful new tools to investigate this idea. For instance, Friedl-Livingston-Zentner \cite{friedl-livingston-zentner} show that alternating knots generate a subgroup with infinitely generated quotient in the concordance group.  Aceto-Alfieri \cite{aceto-alfieri} have studied the question (given in \cite{friedl-livingston-zentner}) of which sums of torus knots are concordant to alternating knots.  Similar questions are addressed in, for instance, \cite{aceto-celoria-park, alfieri, alfieri-kang-stipsicz, yozgyur}.   Here, we study the question of which torus knots are concordant to $L$-space knots.

From the perspective of Heegaard Floer theory, an important family of knots are $L$--space knots \cite{os3}.  This family includes all positive torus knots. In \cite{krcatovich}, Krcatovich showed that all $L$--space knots are prime.  Thus no nontrivial connected sum of knots is an $L$--space knot.  We consider instead concordances from knots to $L$--space knots.  In \cite{Zemke} it was shown that certain connected sums of torus knots are not concordant to $L$--space knots, and this was expanded in \cite{livingston18} to show that no positive linear combination of positive torus knots is concordant to an $L$--space knot.  In this paper we extend this to the case of differences, showing that no nontrivial differences of (multiples) of pairs of torus knots can be concordant to an $L$--space knot.

\begin{theorem}
If the connected sum of distinct positive torus knots $mT(p,q)\,\#\, nT(r,s)$ is concordant to an $L$--space knot, then either $m=0$ and $n=1$ or $m=1$ and $n=0$.
\label{main theorem}
\end{theorem}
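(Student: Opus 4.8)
The plan is to run the entire obstruction through the invariant $\U$ and a secondary refinement of it, using that $\U$ is a concordance invariant, is additive under connected sum, and satisfies $\U_{-K}=-\U_K$. Set $g_1=g(T(p,q))$ and $g_2=g(T(r,s))$, and suppose $K=mT(p,q)\,\#\,nT(r,s)$ is concordant to an $L$--space knot $L$, so that $\U_L=m\,\U_{T(p,q)}+n\,\U_{T(r,s)}$. The only facts about $L$--space knots I will need, beyond primeness \cite{krcatovich}, come from the staircase description of $\cfki$: for any $L$--space knot the function $\U_L$ is convex and symmetric about $t=1$, with $\U_L'(0^+)=-\tau(L)=-g(L)$ and with every slope lying in $[-g(L),g(L)]$.

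First I would dispose of every sign pattern except a genuine difference. Reading off the initial slope gives $\tau(L)=mg_1+ng_2$, and a nontrivial $L$--space knot has $\tau(L)>0$; hence $mg_1+ng_2>0$, which rules out $m,n\le0$ (not both zero), while $m=n=0$ returns the unknot. If $m,n\ge0$ then $K$ is a positive linear combination, and the theorem of Livingston \cite{livingston18} already forces $(m,n)\in\{(1,0),(0,1)\}$. After relabeling the two torus knots it therefore suffices to treat the genuine difference $K=mT(p,q)\,\#\,(-|n|)T(r,s)$ with $m,|n|\ge1$ and to reach a contradiction.

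The heart of the matter is convexity. Since $\U_L$ is convex its slope is nondecreasing, whereas in $m\,\U_{T(p,q)}-|n|\,\U_{T(r,s)}$ the first summand contributes a nondecreasing slope and the second a nonincreasing one. Comparing slopes on either side of the first breakpoints shows that if the first singular point of $\U_{T(r,s)}$ lies strictly before that of $\U_{T(p,q)}$, then on the intervening interval the slope of $\U_L$ falls below $-\tau(L)$, which is impossible; more generally, at each breakpoint of $\U_{T(r,s)}$ the downward slope--jump of $-|n|\,\U_{T(r,s)}$ must be absorbed by an upward jump of $m\,\U_{T(p,q)}$ at the same point. Thus the singular set of $\U_{T(r,s)}$ must be contained in that of $\U_{T(p,q)}$, with a domination inequality on the jump sizes. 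Since both breakpoint sets and all jumps are explicitly computable from the Alexander polynomials (equivalently, the semigroups) of the two torus knots, for most pairs this containment already fails and no concordance can exist.

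The step I expect to be the main obstacle is the commensurable case, in which the two functions share all of their singularities so that convexity is powerless. Its extreme instance is the family $T(2,q)$, which is alternating and hence has the single tent $\U_{T(2,q)}(t)=-\tfrac{q-1}{2}\min(t,2-t)$ with its only breakpoint at $t=1$; a difference of two such knots again has a tent-shaped Upsilon, matching $\U_{T(2,c)}$ for a suitable $c$, so that $\U$, $\tau$, and the genus are all blind to it. To close these cases I would pass to a secondary invariant, such as the secondary Upsilon $\Utwoprime$ (equivalently, finer data of the $t$-modified complex $\ct$), evaluated at a shared singular point; by its additivity its value on $K$ is $m$ times that of $T(p,q)$ minus $|n|$ times that of $T(r,s)$, which I would compare against the value forced by $L$ being an $L$--space knot with the prescribed $\U$. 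Carrying out this secondary computation for torus knots and establishing the resulting incompatibility uniformly across all commensurable pairs is the delicate part.
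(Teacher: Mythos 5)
Your sign reductions are fine, and your convexity step is sound as far as it goes---indeed it slightly refines what the paper itself extracts from $\U$ (the paper only compares the \emph{first} breakpoints, at $t=2/p$ versus $t=2/r$, to rule out $r>p$; your jump-domination argument gives containment of the whole singular set). But the content of the theorem is concentrated precisely in the case you set aside: when every singularity of $\U_{T(r,s)}$ is matched, with sufficient weight, by one of $\U_{T(p,q)}$. This is not a sporadic exception---it includes all differences within the family $T(2,q)$, and more generally the pairs with $rs \mid pq$ that survive the paper's later analysis---and there your proposal has a concrete flaw in addition to being unfinished. The key identity you invoke, that the value of $\Utwoprime$ on $K$ equals $m$ times its value on $T(p,q)$ minus $|n|$ times its value on $T(r,s)$, presupposes additivity of the secondary Upsilon under connected sum. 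No such additivity is known (and none should be expected): $\Utwoprime$ is defined by a min--max over chains in the knot complex, and such quantities satisfy only inequalities relating a connected sum to its summands. Without that identity your endgame cannot even be set up, and you explicitly defer the torus-knot computations and the uniform incompatibility statement that would be needed afterward. So the proof stops exactly where the difficulty begins.

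For comparison, the paper closes these cases without any secondary Floer invariant, using classical data: Livingston's cyclotomic jump formula for the Levine--Tristram signature \cite{livingston18}, applied to a hypothetical $L$--space knot $J$ concordant to $K$, combined with $\deg\Delta_J(t)=2\tau(J)=2\tau(K)$, forces the exact identity $\Delta_J(t)=\left(\Delta_{T(p,q)}(t)\right)^m/\left(\Delta_{T(r,s)}(t)\right)^n$ together with $rs\mid pq$; the Hedden--Watson constraint \cite{hedden-watson} that an $L$--space knot's Alexander polynomial begins $1-t+\mathcal{O}(t^2)$ then gives $m=n+1$; and a direct coefficient comparison, using the semigroup form $\Delta_{T(p,q)}(t)=\sum_{d\in S_{p,q}}(t^d-t^{d+1})$ and the fact that $L$--space Alexander coefficients lie in $\{-1,0,1\}$, yields $m\le 2$ and finally excludes $m=2$. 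Any repair of your approach would need either a proved additivity (or at least a usable inequality with known torus-knot values) for a secondary invariant, or a substitute argument along these polynomial lines.
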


\noindent  The proof uses an approach similar to that of Livingston in \cite{livingston18}, making use of the Levine-Tristram signature function, the Ozsv\'ath--Szab\'{o} tau invariant, the Alexander polynomial, and two additional items:
\begin{itemize}
\item properties of the  Ozsv\'{a}th--Stipsicz--Szab\'{o} Upsilon invariant
\item and the result of Hedden and Watson \cite{hedden-watson} that the leading terms of the Alexander polynomial of an $L
$--space knot of genus $g$ must be $t^{2g}-t^{2g-1}$.  
\end{itemize}
More generally, we make the following conjecture:
\begin{conjecture}
If a connected sum of (possibly several) torus knots is concordant to an $L$--space knot, then it is concordant to a positive torus knot.
\label{conjecture}
\end{conjecture}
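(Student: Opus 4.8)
The plan is to assume $J:=mT(p,q)\,\#\,nT(r,s)$ is concordant to an $L$--space knot $L$ and to play the concordance invariants of $J$ against the rigid structure these invariants must have for $L$. First I would use $\tau$ to cut down the sign pattern of $(m,n)$. Since $\tau$ is additive under connected sum with $\tau(T(p,q))=g(T(p,q))=:g_1>0$ and $\tau(T(r,s))=:g_2>0$, we have $\tau(J)=mg_1+ng_2$, while $\tau(L)=g(L)\ge 0$ because $L$ is an $L$--space knot. Thus $mg_1+ng_2\ge 0$, which immediately excludes $m,n\le 0$ (not both zero). The nonnegative cases are settled by \cite{livingston18}, except when $J$ reduces to a single torus knot, i.e. $(m,n)=(1,0)$ or $(0,1)$. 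Hence it remains to treat the mixed--sign case, which after relabeling the two knots we may take to be $m=a>0$, $n=-b<0$, so that $J=\#^{a}T(p,q)\,\#\,\#^{b}(-T(r,s))$ and $g(L)=\tau(J)=ag_1-bg_2\ge 0$.

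Next I would extract a size comparison from the Levine--Tristram signature. By Litherland's computation \cite{litherland}, as $\omega=e^{2\pi i x}$ leaves $1$ the first jump of $\sigma_{T(p,q)}$ occurs at $x=1/(pq)$ and is negative, and likewise for $T(r,s)$ at $x=1/(rs)$. Since $\sigma_J=a\,\sigma_{T(p,q)}-b\,\sigma_{T(r,s)}$ and $\sigma_{T(p,q)}\equiv 0$ on $[0,1/(pq))$, the first jump of $\sigma_J$ sits at $\min\{1/(pq),1/(rs)\}$ and involves only one summand. The signature function of an $L$--space knot is nonpositive with negative first jump; so if $rs>pq$ the first jump of $\sigma_J$ comes from $-b\,\sigma_{T(r,s)}$ and is positive, a contradiction. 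This reduces us to $pq\ge rs$.

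For the generic remaining configuration I would invoke the Upsilon invariant. Because $L$ is an $L$--space knot, $\Upsilon_L$ is convex on $[0,1]$ with initial slope $-g(L)$, whereas $\Upsilon_J=a\,\Upsilon_{T(p,q)}-b\,\Upsilon_{T(r,s)}$; convexity then forces every breakpoint of $\Upsilon_{T(r,s)}$ in $(0,1)$ to coincide with a breakpoint of $\Upsilon_{T(p,q)}$ at which $a\cdot(\text{jump of }\Upsilon_{T(p,q)})\ge b\cdot(\text{jump of }\Upsilon_{T(r,s)})$. Since $T(p,q)$ and $T(r,s)$ are distinct, their staircases, and hence their Upsilon breakpoint data, differ, and this matching fails in all but a short list of configurations. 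The configurations that evade the convexity argument are exactly those in which $\Upsilon_{T(r,s)}$ has no interior breakpoint (the family $T(2,2k+1)$, whose Upsilon is linear on $[0,1]$) together with the boundary case $pq=rs$.

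To close these residual cases I would turn to the Alexander polynomial and the Hedden--Watson constraint \cite{hedden-watson}. Here $\Delta_J=\Delta_{T(p,q)}^{\,a}\,\Delta_{T(r,s)}^{\,b}$, whose two leading terms are $t^{2(ag_1+bg_2)}-(a+b)\,t^{2(ag_1+bg_2)-1}$, while Hedden--Watson forces the two leading terms of $\Delta_L$ to be $t^{2g(L)}-t^{2g(L)-1}$ with $g(L)=ag_1-bg_2$. Feeding these into the Fox--Milnor factorization $\Delta_J\,\Delta_L\doteq f(t)f(t^{-1})$ and comparing top coefficients should force $a+b=1$, contradicting $a,b\ge 1$. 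I expect this last step to be the main obstacle: in the near--cancellation regime $pq\ge rs$ the negatively weighted summand can imitate $L$--space behaviour for any single invariant (the signature first jump is inconclusive, and the families $T(2,2k+1)$ have linear, hence harmlessly convex, Upsilon), so the contradiction must come from combining the convexity and slope data of $\Upsilon$, the first--jump data of $\sigma$, and the precise leading--coefficient data of Hedden--Watson at once. Isolating and closing the equal--size case $pq=rs$ of distinct torus knots is likely to be the most delicate part of the argument.
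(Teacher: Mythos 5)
The first issue is one of scope: the statement you set out to prove is Conjecture \ref{conjecture}, which the paper does \emph{not} prove --- it is stated as an open conjecture, and the paper establishes only the two-generator case, Theorem \ref{main theorem}, together with partial results toward the general case (Proposition \ref{gen alex poly}, Corollary \ref{gen m=n+1}, and Theorem \ref{upsilon}). Your very first line sets $J=mT(p,q)\,\#\,nT(r,s)$, so even if every subsequent step were airtight you would have proved only Theorem \ref{main theorem}, not the conjecture. The genuinely open difficulty lives in the many-summand case: by the Feller--Krcatovich relations (Theorem \ref{feller-krcatovich}), quite different combinations of torus knots can share Upsilon data, so breakpoint-matching arguments degrade, and the signature/Alexander-polynomial bookkeeping in the general setting yields only the divisibility statement and $m=n+1$ of Proposition \ref{gen alex poly} and Corollary \ref{gen m=n+1}, far short of the conjecture.

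Second, even as an attempt at Theorem \ref{main theorem}, the argument has concrete gaps. (i) Your signature step relies on the claim that the signature function of an $L$--space knot ``is nonpositive with negative first jump''; this is not among the quoted facts and you give no argument for it (the paper instead rules out $r>p$ using Theorem \ref{facts}(b), the increasing derivative of Upsilon, in Proposition \ref{p<r}). (ii) Your claim that convexity reduces the residual cases to exactly the family $T(2,2k+1)$ together with $pq=rs$ is false: for $T(r,s)=T(3,4)$ and $T(p,q)=T(6,7)$, the single interior breakpoint of $\Upsilon_{T(3,4)}$ at $t=2/3$ is also a breakpoint of $\Upsilon_{T(6,7)}$, so your matching criterion is satisfied by a pair far outside your residual list; the constraint one can actually extract (via signatures, not Upsilon) is $rs\mid pq$, as in Proposition \ref{alex poly}. (iii) The endgame does not close, as you yourself concede: in a Fox--Milnor factorization $\Delta_J(t)\Delta_L(t)\doteq f(t)f(t^{-1})$ the unknown factor $f$ can absorb any second-leading coefficient (take $f$ monic with constant term $1$ and adjust $c_1+c_{d-1}$), so comparing top coefficients forces nothing and in particular does not yield $a+b=1$. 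The step that actually finishes the two-knot case in the paper is of a different nature: concordance plus the signature-jump analysis give the \emph{exact} identity $\Delta_J(t)=\left(\Delta_{T(p,q)}(t)\right)^m/\left(\Delta_{T(r,s)}(t)\right)^n$ (Proposition \ref{alex poly}), whence $m=n+1$ (Corollary \ref{m=n+1}); then a coefficient-by-coefficient expansion of that quotient, using $\Delta_{T(p,q)}(t)=\sum_{d\in S_{p,q}}(t^d-t^{d+1})$ and the multinomial theorem, produces in $\Delta_J$ either two $-1$ coefficients with no intervening $+1$ or a coefficient $\leq -2$ (Lemma \ref{m=2} and the proof of Theorem \ref{main theorem}), contradicting the alternating $\pm1$ form of the Alexander polynomial of an $L$--space knot. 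Nothing in your outline supplies a substitute for this step, and no argument currently known closes the general, many-summand case.
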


\noindent Note that since torus knots are linearly independent in the concordance group, a connected sum of torus knots is concordant to a positive torus knot $T(p,q)$ only if it is of the form  $$mT(p,q)\,\#\, (1-m) T(p,q) \,\#\, T(p_1, q_1) \,\# -T(p_1, q_1) \,\#\, \dots \,\#\, T(p_N, q_N)\,\# -T(p_N, q_N)$$ where $m\geq1$.  

As progress towards this conjecture, we give conditions under which a connected sum of  many torus knots  is not concordant to an $L$--space knot.  These conditions involve all of the aforementioned invariants, as well as the relations among Upsilon functions for torus knots discovered by Feller and Krcatovich  \cite{feller-krcatovich}.

\begin{notation*}
Throughout this paper, all torus knots $T(a, b)$ considered are such that $1<a<b$ and $\gcd(a,b) = 1$.   All Alexander polynomials are normalized to be polynomials (rather than as symmetric Laurent polynomials) with positive constant terms.    In addition, we abuse the ``big O'' notation $\mathcal{O}(t^k)$ to mean ``terms of degree greater than or equal to $k$''.
\end{notation*}

\begin{ack}
This problem was suggested by Charles Livingston and the work greatly benefited from discussions with him.  Thanks are also due to the referee for very detailed and helpful corrections and suggestions.
\end{ack}

\section{Preliminaries}

In \cite{os4}, Ozsv\'{a}th and Szab\'{o} introduced the Heegaard Floer invariant $\widehat{\mathit{HF}}(Y)$ which associates a graded abelian group to a closed $3$--manifold $Y$.  A rational homology $3$--sphere $Y$ is called an $L$--space if rank$\left(\widehat{\mathit{HF}}(Y)\right) = |H_1(Y;\mathbb{Z})|$ (see \cite{os3}).  A knot $K$ is called an $L$--space knot if it admits a positive $L$--space surgery.  Since lens spaces are $L$-spaces, positive torus knots are $L$--space knots.  In this section, we gather some useful facts and references concerning $L$--space knots and torus knots, beginning with their knot complexes. 

The Heegaard Floer knot complex $\cfki$ was introduced in \cite{os2}.  For $L$--space knots, the complex $\cfki$ is always a {\it staircase} complex (as in Figure \ref{staircase}) where the height and width of each step is determined by the gaps in the exponents of the Alexander polynomial of $K$. The Alexander polynomial of an $L$--space knot can be written as 
$$\Delta_K(t)=\sum_{i=0}^d (-1)^i t^{a_i}$$
for some sequence of integers $\{a_i\}$.  The complex $\cfki$ is a staircase of the form $$[a_1-a_0, a_2-a_1,\dots,a_d-a_{d-1}]$$ where the indices alternate between horizontal and vertical steps.  For more details, see \cite{os3} and \cite{borodzik-livingston}.

\begin{figure}[ht]
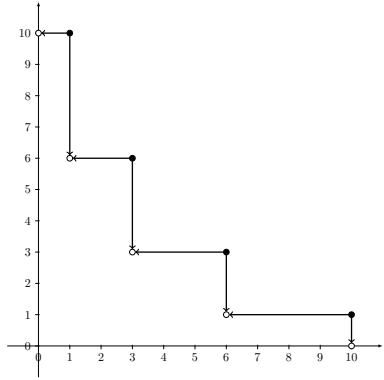

  \centering
  \resizebox{5cm}{5cm}{\includestandalone{fig56}}
  \caption{$\cfk ^{\infty}(T(5,6))$ is a staircase of the form $[1, 4, 2, 3, 3, 2, 4, 1]$.}
  \label{staircase}
\end{figure}

In \cite{os2}, Ozsv\'{a}th and Szab\'{o} defined the tau invariant, $\tau(K)$.  This is a concordance invariant which is additive under connected sum.  In \cite{oss}, Ozsv\'{a}th, Stipsicz, and Szab\'{o} defined the $Upsilon$ invariant, $\U_K(t)$, a piecewise-linear function with domain $[0,2]$.  The tau invariant of a knot $K$ is equal to the negative of the slope of $\U_K(t)$ near $t=0$.  See \cite{oss} for explicit computations for the family of knots $T(p, p+1)$ and for formulas for computing $\U$ for $L$--space knots from $\mathrm{CFK}^{\infty}$.  The derivative $\U_K'(t)$ is piecewise--constant with singularities at $t$--values where the slope changes in $\U_K(t)$.  See \cite{livingston17} for results on computing $\U_K'(t)$ from $\mathrm{CFK}^{\infty}(K)$.

Finally, consider the Levine--Tristram signature function $\sigma_K(t)$.  The signature function is piecewise--constant and integer--valued with possible jumps occurring at zeroes of the Alexander polynomial of $K$.  Livingston in \cite{livingston18} showed that if the cyclotomic polynomial $\phi_c(t)$ divides $\Delta_{T(p,q)}(t)$, then $\sigma_{T(p,q)}(t)$ jumps by $-2$ at $t=1/c$.  We use this fact along with the following results to prove Theorem \ref{main theorem}.

\begin{theorem}[\cite{hedden-watson, livingston17, os3}]
If $K$ is an $L$--space knot, then
\begin{itemize}
\item[(a)] $2 \tau (K) = \deg\left(\Delta_K(t)\right)$.
\item[(b)] $\U_K'(t)$ is increasing.
\item[(c)] $\Delta_K(t)$ has lowest order terms $1-t$ and highest order terms $t^{2g}-t^{2g-1}$, where $g=g(K)$ is the genus of the knot.
\end{itemize}
\label{facts}
\end{theorem}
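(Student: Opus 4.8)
The engine for all three parts is the staircase description of $\cfki$ recalled above. Write $\Delta_K(t)=\sum_{i=0}^d(-1)^it^{a_i}$ with $0=a_0<a_1<\dots<a_d$. Since $L$--space knots are fibered, the top coefficient is a unit, so $a_d=\deg\Delta_K=2g$ where $g=g(K)$, and the symmetric normalization forces $d$ to be even with $a_i+a_{d-i}=2g$ for every $i$. The associated staircase $[a_1-a_0,\dots,a_d-a_{d-1}]$ then carries everything we need. Part (a) is the most routine: one reads off from the staircase that the generator realizing $\tau$ sits in Alexander grading $g$, so $\tau(K)=g(K)$ (alternatively, this is the equality $\tau=g$ that a positive $L$--space surgery forces), and combined with $\deg\Delta_K=2g$ from fiberedness this gives $2\tau(K)=2g=\deg\Delta_K(t)$.

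For part (c), the symmetry $a_i+a_{d-i}=2g$ shows that the two assertions are equivalent: the lowest terms are $1-t$ (that is, $a_0=0$ and $a_1=1$) if and only if the highest terms are $t^{2g}-t^{2g-1}$ (that is, $a_d=2g$ and $a_{d-1}=2g-1$). The normalization already gives $a_0=0$, so the entire content is the single statement $a_1=1$, equivalently $\widehat{\hfk}(K,g-1)\neq 0$ with the next nonzero Alexander grading below the top being exactly $g-1$. This is the Hedden--Watson computation; I would recover it by feeding the staircase into the large--surgery formula and using that an $L$--space surgery forces $\widehat{\hf}$ to have minimal rank in each \Spinc structure, which pins the second nonzero term of $\Delta_K$ to degree $2g-1$.

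Finally, for part (b), I would invoke Livingston's formula expressing $\U_K'(t)$ in terms of $\cfki$. The singularities of $\U_K'$ occur as $t$ sweeps across the corners of the staircase, and each corner contributes a jump whose sign is governed by the order in which the corners are encountered. Because the staircase of an $L$--space knot is built from a single monotone chain of steps, the corners are met in order of increasing slope, so every jump of $\U_K'$ is positive; equivalently, $\U_K$ is convex on $[0,2]$. This is precisely the assertion that $\U_K'(t)$ is increasing.

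I expect the genuine obstacle to be the remaining kernel of part (c) --- the Hedden--Watson fact that the second-highest nonzero coefficient of $\Delta_K$ lies in degree $2g-1$ --- since this does not follow from staircase bookkeeping alone and requires the surgery input sketched above. Part (b) is intermediate, needing only that one formalize the ``increasing slope'' claim for the monotone staircase, while part (a) and the symmetry reduction in (c) are essentially immediate from the structural facts already in hand.
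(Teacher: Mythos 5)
A preliminary remark: the paper does not prove Theorem \ref{facts} at all; it is quoted as known, with (a) attributed to \cite{os3}, (b) to \cite{livingston17}, and (c) to \cite{hedden-watson}. So your attempt has to be measured against the cited literature rather than an in-paper argument. On that score, your treatments of (a) and (b) are essentially the standard ones and are fine: for (a), $\tau(K)=g(K)$ for $L$--space knots together with $\deg\Delta_K = 2g$ (from the structure theorem/fiberedness); for (b), the staircase makes $\Upsilon_K$ a maximum of finitely many linear functions determined by the staircase vertices, hence convex, hence $\Upsilon_K'$ is increasing --- this is exactly the Livingston/Borodzik--Hedden route.

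Part (c), however, contains a genuine gap, and it sits exactly where you flag the crux to be. The symmetry reduction to ``$a_1=1$'' is correct, but your proposed mechanism --- feed the staircase into the large--surgery formula and use that an $L$--space surgery forces $\widehat{HF}$ to have rank one in each spin$^c$ structure --- provably cannot pin down $a_1$, because \emph{every} abstract staircase passes that test, whatever its step lengths. Concretely, take the staircase $[2,2]$: generators $w_0,w_1,w_2$ at $(0,2)$, $(2,2)$, $(2,0)$ with $\partial w_1 = w_0+w_2$, which is the complex that would correspond to $\Delta(t)=t^4-t^2+1$ (genus $2$), i.e.\ precisely a polynomial that part (c) must exclude. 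Writing $\widehat{A}_s = C\{\max(i,j-s)=0\}$, one computes directly that $H_*(\widehat{A}_s)\cong\F$ for every $s$. For instance, $\widehat{A}_1$ has generators $Uw_0$ at $(-1,1)$, $U^2w_1$ at $(0,0)$, $U^2w_2$ at $(0,-2)$, and since $U^2w_0$ lies at $(-2,0)$, below the cut, the induced differential is $\partial(U^2w_1)=U^2w_2$, leaving homology of rank one; the computations for $s=0,\pm 2$ are identical in character, and $|s|>2$ is automatic. So all large surgeries on a hypothetical knot with this complex would be $L$--spaces: rank minimality is satisfied, yet the conclusion of (c) fails. This is no accident --- the staircase shape is exactly what the $L$--space condition on large surgeries forces (that is the content of \cite{os3}), so large--surgery ranks can never see the lengths of the steps. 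The Hedden--Watson theorem requires genuinely different input (their own argument, or, later, the fiberedness statement $\widehat{\hfk}(K,g-1)\neq 0$ of Baldwin--Vela-Vick, proved by contact--geometric methods). As it stands, your proposal establishes (a), (b), and the symmetry reduction in (c), but not the core of (c).
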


Much of the work in this paper relies on understanding the Alexander polynomial of torus knots.  We list here three useful characterizations.  These are all classical results; see, for example, \cite{rolfsen, wall}.
\begin{theorem}  Let $T(p,q)$ be a torus knot with $0<p<q$ and $\gcd (p,q) =1$.  The following hold:
\begin{enumerate}
\item $\displaystyle{\Delta_{T(p,q)}(t) = \frac{(t^{pq}-1)(t-1)}{(t^p-1)(t^q-1)}}$.
\item $\displaystyle{\Delta_{T(p,q)}(t) = \prod _{\substack{h|p, \, \ell|q,\\ h, \ell \,\neq\, 1}}\phi_{h\ell}(t)}$ where $\phi_n(t)$ is the $n$th cyclotomic polynomial.
\item $\displaystyle{\Delta_{T(p,q)}(t) = \sum_{s\in S_{p,q}} (t^s-t^{s+1})}$ where $S_{p,q} = \{ap+bq \mid a, b \in \Z_{\geq 0}\}$.
\end{enumerate}
\end{theorem}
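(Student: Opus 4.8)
The three formulas are equivalent reorganizations of a single rational function, so the plan is to establish formula (1) by genuine topology and then derive (2) and (3) from it by elementary algebra. To prove (1), I would use that $T(p,q)$ is a fibered knot, so that its Alexander polynomial is the characteristic polynomial of the monodromy acting on the first homology of the fiber. Realizing $T(p,q)$ as the link of the Brieskorn singularity $z^p+w^q=0$, the Milnor fiber has first Betti number $(p-1)(q-1)=2g$ and the monodromy eigenvalues are exactly $\zeta_p^{\,a}\zeta_q^{\,b}$ with $1\le a\le p-1$ and $1\le b\le q-1$, where $\zeta_n=e^{2\pi i/n}$. Thus $\Delta_{T(p,q)}(t)=\prod_{a=1}^{p-1}\prod_{b=1}^{q-1}(t-\zeta_p^{\,a}\zeta_q^{\,b})$. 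Because $\gcd(p,q)=1$, the products $\zeta_p^{\,a}\zeta_q^{\,b}$ with $(a,b)$ ranging over all of $\{0,\dots,p-1\}\times\{0,\dots,q-1\}$ enumerate each $pq$-th root of unity exactly once, so the full product over this index set is $t^{pq}-1$. Separating off the strata $a=0$ and $b=0$ gives
\[ t^{pq}-1=(t^q-1)\cdot\frac{t^p-1}{t-1}\cdot\prod_{a=1}^{p-1}\prod_{b=1}^{q-1}(t-\zeta_p^{\,a}\zeta_q^{\,b}), \]
and solving for the last factor yields formula (1). (Alternatively one can run Fox calculus on the presentation $\langle x,y\mid x^p=y^q\rangle$, being careful that for a knot $\Delta_K(t)\doteq (t-1)\det M_j/(\alpha(x_j)-1)$, with $\alpha$ the abelianization; the extra factor of $t-1$ is exactly what produces the numerator of (1).)

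Next I would deduce (2) from (1) by cyclotomic factorization. Substituting $t^n-1=\prod_{d\mid n}\phi_d(t)$ turns formula (1) into $\frac{(t-1)\prod_{d\mid pq}\phi_d}{\prod_{d\mid p}\phi_d\;\prod_{d\mid q}\phi_d}$. Coprimality of $p$ and $q$ means every divisor of $pq$ factors uniquely as $h\ell$ with $h\mid p$ and $\ell\mid q$; grouping the divisors with $\ell=1$, with $h=1$, and with both $h,\ell\neq 1$, and accounting for the single overlap at $\phi_1=t-1$, cancels the entire denominator together with the leftover $(t-1)$ and leaves $\prod_{h\mid p,\ \ell\mid q,\ h,\ell\neq 1}\phi_{h\ell}(t)$, which is (2).

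Finally I would deduce (3) from (1) through the generating function of the numerical semigroup $S_{p,q}=\langle p,q\rangle$. Expanding $\frac{1}{(1-t^p)(1-t^q)}=\sum_{n\ge0}r(n)t^n$, where $r(n)$ counts nonnegative solutions of $ap+bq=n$, one multiplies by $1-t^{pq}$ to obtain $\frac{1-t^{pq}}{(1-t^p)(1-t^q)}=\sum_n\bigl(r(n)-r(n-pq)\bigr)t^n$, and the key combinatorial point is that $r(n)-r(n-pq)$ equals $1$ when $n\in S_{p,q}$ and $0$ otherwise (the integer solutions of $ap+bq=n$ form a single progression with step $(q,-p)$, and replacing $n$ by $n-pq$ deletes exactly one nonnegative solution precisely when one exists). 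Hence $\sum_{s\in S_{p,q}}t^s=\frac{1-t^{pq}}{(1-t^p)(1-t^q)}$, and multiplying by $(1-t)$ gives on the left $\sum_{s\in S_{p,q}}(t^s-t^{s+1})$ and on the right the rational function of formula (1).

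The bulk of (2) and (3) is routine bookkeeping; the genuine inputs, and the steps I expect to be the main obstacle, are the topological computation underlying (1)—either the monodromy eigenvalue description or the correctly normalized Fox calculus, where the subtle extra factor of $t-1$ is easy to misplace—together with the semigroup counting identity $r(n)-r(n-pq)\in\{0,1\}$ that powers (3).
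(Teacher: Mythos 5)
Your proposal is correct, but there is nothing in the paper to compare it against: the paper does not prove this theorem at all, stating only that ``these are all classical results'' and citing Rolfsen and Wall. What you have supplied is a genuine, self-contained proof, and each step checks out. For (1), the monodromy argument is sound: $T(p,q)$ is fibered, its Alexander polynomial is the characteristic polynomial of the monodromy on $H_1$ of the Milnor fiber of $z^p+w^q$, and the eigenvalue list $\zeta_p^a\zeta_q^b$, $1\le a\le p-1$, $1\le b\le q-1$, is the classical Brieskorn--Pham computation; the Chinese-remainder bookkeeping (that $(a,b)\mapsto \zeta_p^a\zeta_q^b$ enumerates the $pq$-th roots of unity once when $\gcd(p,q)=1$) and the stratification by $a=0$, $b=0$ correctly yield the rational-function formula. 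The derivation of (2) is right, including the one subtlety of the overlap at $\phi_1=t-1$, which appears once in each of the two groups $\{\ell=1\}$ and $\{h=1\}$ and is cancelled by the explicit $(t-1)$ in the numerator. For (3), your key counting claim is valid and can be made fully precise by noting that nonnegative solutions of $ap+bq=n-pq$ biject with nonnegative solutions of $ap+bq=n$ having $a\ge q$, so $r(n)-r(n-pq)$ counts solutions with $0\le a\le q-1$, of which there is at most one (since $a$ is determined mod $q$), and exactly one precisely when $n\in S_{p,q}$. In short: where the paper outsources the statement to the literature, your argument fills that gap correctly, with the fibered-knot/monodromy input carrying the topological content and the rest being the elementary algebra you describe.
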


\section{Proof of the main theorem}
We break the proof into several smaller propositions.  Note that if $J$ is concordant to $K$, then $\tau(K) = \tau(J)$, $\sigma_K(t) = \sigma_J(t)$ (away from roots of $\Delta_K(t)$), and $\U_K(t) = \U_J(t)$, as these are all concordance invariants.

Livingston's result in \cite{livingston18} includes the cases where both $m$ and $n$ are nonnegative, so we need only show the result for at most one of $m,n$ positive.   The first case is easy; we use the Ozsv\'{a}th--Szab\'{o} tau invariant to rule out the case where $m,n\leq 0$. 

\begin{proposition}
The knot $K = mT(p,q)\,\#\, nT(r,s)$ where $m,n\leq 0$ is not concordant to a nontrivial $L$--space knot.
\label{no negs}
\end{proposition}

\begin{proof}
If $m=n=0$, then $K$ is the unknot.  So assume that $m,n\leq 0$ with at most one of $m,n$ equal to 0.  Suppose that $J$ is a nontrivial $L$--space knot concordant to $K$ and consider the Ozsv\'{a}th--Szab\'{o} tau invariant, $\tau(K)$.  Recall that $0<p<q$ and $0<r<s$.  Since the knots $J$, $T(p,q)$, and $T(r,s)$ are nontrivial $L$--space knots, their tau invariants $\tau (J)$, $\tau(T(p,q))$, and $\tau (T(r,s))$ are positive by Theorem \ref{facts} (a).  However, $J$ is concordant to $K$, so by the additivity of $\tau$ under forming connected sums, we have that
$$\tau (J) = \tau (K) =m\tau (T(p,q)) +n\tau (T(r,s)) <0,$$ 
a contradiction.  Thus $K$ is not concordant to an $L$--space knot.
\end{proof}

The remaining cases are those where $K = mT(p,q)\,\#\, nT(r,s)$ and $m\cdot n < 0$.  Without loss of generality, we will assume that $m>0$ and $n<0$.  For ease of notation, we write this as $K = mT(p,q)\,\#\, {-nT(r,s)}$ with $m,n>0$.  Next, we use the  Ozsv\'{a}th--Stipsicz--Szab\'{o} Upsilon invariant to rule out the case where $r>p$.

\begin{proposition}
The knot $K =mT(p,q)\,\#\, {{-nT(r,s)}}$, where $m,n> 0$ and $r>p$, is not concordant to an $L$--space knot.
\label{p<r}
\end{proposition}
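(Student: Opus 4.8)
The plan is to exploit the additivity of the Upsilon invariant under connected sum together with the convexity criterion of Theorem~\ref{facts}(b). If $J$ is an $L$--space knot concordant to $K$, then $\U_J=\U_K$, and by additivity $\U_K(t)=m\U_{T(p,q)}(t)-n\U_{T(r,s)}(t)$, so that $\U_K'(t)=m\U_{T(p,q)}'(t)-n\U_{T(r,s)}'(t)$. Theorem~\ref{facts}(b) forces $\U_J'=\U_K'$ to be increasing, so it suffices to exhibit a single value of $t$ at which $\U_K'$ jumps downward.

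The key ingredient is the initial behavior of $\U_{T(a,b)}$ for a torus knot with $1<a<b$. First I would read off from $\Delta_{T(a,b)}(t)=\sum_{s\in S_{a,b}}(t^s-t^{s+1})$ that its first three exponents are $a_0=0$, $a_1=1$, and $a_2=a$: since the smallest positive element of the numerical semigroup $S_{a,b}$ is $a$, the integers $1,\dots,a-1$ are all gaps, so the coefficient of $t^k$ vanishes for $2\le k\le a-1$ and equals $+1$ at $t^a$. Hence the staircase $\cfki$ of $T(a,b)$ begins with a step of length $c_1=1$ followed by a step of length $c_2=a-1$. Using the description of $\U$ for $L$--space knots as the maximum of the linear functionals attached to the convex corners of the staircase (see \cite{oss,livingston17}), the top corner contributes the functional $-g_{a,b}\,t$, where $g_{a,b}=(a-1)(b-1)/2$, and a short computation shows it is dominated by the next corner exactly when $t$ exceeds $\frac{2c_1}{c_1+c_2}=\frac{2}{a}$. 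Thus $\U_{T(a,b)}'(t)=-g_{a,b}$ on $(0,2/a)$, the first breakpoint occurs at $t=2/a$, and the slope jumps upward there.

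With this local statement in hand the proposition follows quickly. Since $r>p$ we have $2/r<2/p$, so at $t=2/r$ the term $\U_{T(r,s)}'$ jumps up while $\U_{T(p,q)}'$ has not yet reached its first breakpoint at $2/p$ and so is locally constant. Therefore $-n\U_{T(r,s)}'$ jumps strictly downward with no compensating increase from $m\U_{T(p,q)}'$, so $\U_K'$ decreases at $t=2/r$, contradicting that $\U_K'=\U_J'$ must be increasing. I expect the main obstacle to be the second step: pinning down that the first breakpoint of $\U_{T(a,b)}$ is exactly at $t=2/a$ with an upward jump, which rests on the explicit staircase-to-Upsilon formula and the identification $a_2=a$. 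Once this is established, the comparison driven by the hypothesis $r>p$ is immediate.
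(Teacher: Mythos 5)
Your proof is correct and takes essentially the same approach as the paper: both arguments use additivity of $\U$ under connected sum together with Theorem~\ref{facts}(b), locate the first breakpoint of $\U_{T(a,b)}'$ at $t=2/a$, and conclude that $\U_K'$ must have a negative jump at $t=2/r<2/p$, contradicting $\U_K'=\U_J'$ being increasing. The only difference is cosmetic: you justify the breakpoint location explicitly via the semigroup structure of the Alexander polynomial and the staircase-to-Upsilon formula, whereas the paper asserts the same fact by ``analyzing $\mathrm{CFK}^{\infty}(T(p,q))$ and $\mathrm{CFK}^{\infty}(T(r,s))$.''
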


\begin{proof}
Suppose that $J$ is a $L$--space knot concordant to $K$ and consider the Ozsv\'{a}th--Stipsicz--Szab\'{o} Upsilon invariant, $\Upsilon_K(t)$.  Recall that $0<p<q$ and $0<r<s$.  Since the knots $J$, $T(p,q)$, and $T(r,s)$ are all nontrivial $L$--space knots, $\U'_J(t)$, $\Upsilon '_{T(p,q)}(t)$, and $\Upsilon '_{T(r,s)}(t)$ must be increasing.  Analyzing $\mathrm{CFK}^{\infty}(T(p,q))$ and $\mathrm{CFK}^{\infty}(T(r,s))$, we see that $\Upsilon '_{T(p,q)}(t)$ has its first jump at $t=2/p$ and $\Upsilon '_{T(r,s)}(t)$ has its first jump at $t=2/r$.  Since $\U$ is additive under connected sum, $\U'_K(t)$ has its first jump at $t=$ min$\{2/p, 2/r\} = 2/r$.  Because $-n<0$, $\Upsilon '_{{-nT(r,s)}}(t)$ is decreasing and this first jump is negative, implying that $\U'_K(t)$ is not increasing.  Since $\U_J(t) = \U_K(t)$, we have that $\U'_J(t)$ is not increasing, which is a contradiction.
\end{proof}

For the remaining cases, we rely heavily on the work of Livingston in \cite{livingston18} for analyzing the relationship between the Alexander polynomial and the Levine--Tristram signature function of connected sums of torus knots.

\begin{proposition}
If the knot  $K =mT(p,q)\,\#\, {-nT(r,s)}$,  where $m,n\geq 1$ and $r \leq p$, is concordant to an $L$--space knot $J$, then $rs$ divides $pq$ and 
$$\frac{\left(\Delta_{T(p,q)}(t)\right)^m}{\left(\Delta_{T(r,s)}(t)\right)^{n}} = \Delta_J(t).$$
\label{alex poly}
\end{proposition}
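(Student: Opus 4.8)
The plan is to extract the Alexander polynomial of $J$ by comparing two computations of $\deg\Delta_J$: an \emph{exact} value coming from the tau invariant, and a \emph{lower bound} coming from the jumps of the Levine--Tristram signature function. These two will be forced to coincide, and the resulting rigidity pins down $\Delta_J$ completely. First I would record the concordance identities. Since $J$ is concordant to $K = mT(p,q)\,\#\,{-nT(r,s)}$, we have $\tau(J)=\tau(K)$ and $\sigma_J(t)=\sigma_K(t)$ away from roots of the Alexander polynomials; additivity of $\sigma$ together with $\sigma_{-T(r,s)}=-\sigma_{T(r,s)}$ gives $\sigma_J(t)=m\,\sigma_{T(p,q)}(t)-n\,\sigma_{T(r,s)}(t)$. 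Applying Theorem \ref{facts}(a) to the three $L$--space knots and using additivity of $\tau$ yields the exact value $\deg\Delta_J = m\deg\Delta_{T(p,q)} - n\deg\Delta_{T(r,s)}$.

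Next I would use the cyclotomic factorization of torus knot Alexander polynomials: each of $\Delta_{T(p,q)}$ and $\Delta_{T(r,s)}$ is a product of \emph{distinct} cyclotomic polynomials, the indices $h\ell$ being distinct because $\gcd(p,q)=\gcd(r,s)=1$. Let $A$ (resp.\ $B$) be the set of $c$ with $\phi_c\mid\Delta_{T(p,q)}$ (resp.\ $\phi_c\mid\Delta_{T(r,s)}$). By Livingston's result in \cite{livingston18}, $\sigma_{T(p,q)}$ and $\sigma_{T(r,s)}$ jump by exactly $-2$ at $t=1/c$ for $c\in A$, $B$ respectively, and nowhere else; hence the jump of $\sigma_J$ at $\omega_c=e^{2\pi i/c}$ is $-2m$ for $c\in A\setminus B$, $+2n$ for $c\in B\setminus A$, and $2(n-m)$ for $c\in A\cap B$, with no other jumps. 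I then invoke the classical bound that the jump of the signature at a root of $\Delta_J$ is at most twice the multiplicity $\mu_c$ of that root, so $\mu_c\ge\tfrac12\lvert\delta\sigma_J(\omega_c)\rvert$; summing $\mu_c\deg\phi_c$ over all roots gives a lower bound $L$ for $\deg\Delta_J$.

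The heart of the argument is comparing $L$ with the exact degree $D$. Writing $\sum_X$ for $\sum_{c\in X}\deg\phi_c$, a short computation gives $L-D = 2n\sum_{B\setminus A} + (\lvert m-n\rvert-(m-n))\sum_{A\cap B}$, which is a sum of two manifestly nonnegative terms; since $L\le D$, both must vanish. This forces $B\subseteq A$ (every cyclotomic factor of $\Delta_{T(r,s)}$ divides $\Delta_{T(p,q)}$) and $m\ge n$, and it forces \emph{equality} throughout the degree bound: $\Delta_J$ has no roots outside $\{\omega_c:c\in A\}$ and multiplicity exactly $\tfrac12\lvert\delta\sigma_J\rvert$ at each, namely $m$ for $c\in A\setminus B$ and $m-n$ for $c\in B$ (with the $m=n$ factors cancelling). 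These are precisely the exponents of $(\Delta_{T(p,q)})^m/(\Delta_{T(r,s)})^n$, so after normalizing to positive constant term we conclude $\Delta_J = (\Delta_{T(p,q)})^m/(\Delta_{T(r,s)})^n$. Finally, since $rs\in B\subseteq A$, the torus factorization lets me write $rs=h'\ell'$ with $h'\mid p$ and $\ell'\mid q$, whence $rs=h'\ell'\mid pq$.

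I expect the main obstacle to be the bound relating the signature jump to the multiplicity of the corresponding root of $\Delta_J$: this classical but nontrivial fact about Levine--Tristram signatures is exactly what makes the degree comparison tight, collapsing $L\le D$ to an equality and thereby determining $\Delta_J$ on the nose. The remaining delicate points are bookkeeping: verifying that $\sigma_J$ jumps \emph{only} at the $\omega_c$, handling the degenerate case $m=n$ (where the jump at $c\in A\cap B$ vanishes and the corresponding factors are cancelled rather than detected), and confirming that the cyclotomic-index count genuinely yields the divisibility $rs\mid pq$.
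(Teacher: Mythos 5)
Your proposal is correct and takes essentially the same route as the paper's proof: both arguments compare the exact value $\deg\Delta_J = 2\tau(K)$ (from additivity of $\tau$ and Theorem \ref{facts}(a)) against the lower bound coming from Livingston's cyclotomic signature-jump result together with the jump-versus-multiplicity bound, force equality, and then read off $\Delta_J$ and the divisibility $rs \mid pq$ from $\phi_{rs} \mid \Delta_{T(p,q)}$. The only difference is bookkeeping: you express the deficit $L-D$ as a sum of two manifestly nonnegative terms (which has the small virtue of making $m\geq n$ explicit), whereas the paper sandwiches $\sum_{c\in C}\deg(\phi_c(t))$ between $\deg(\Delta_{T(r,s)}(t))$ on both sides.
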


\begin{corollary}
If the knot  $K =mT(p,q)\,\#\, {-nT(r,s)}$,  where $m,n\geq 1$ and $r\leq p$, is concordant to an $L$--space knot $J$, then $m = n+1$.
\label{m=n+1}
\end{corollary}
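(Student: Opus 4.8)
The plan is to derive $m = n+1$ purely from the Alexander polynomial identity supplied by Proposition \ref{alex poly}, by reading off its lowest-order behavior. First I would clear the denominator and restate the conclusion of Proposition \ref{alex poly} as the honest polynomial identity
$$\left(\Delta_{T(p,q)}(t)\right)^m = \Delta_J(t)\cdot\left(\Delta_{T(r,s)}(t)\right)^n,$$
which is legitimate because, by our normalization, all three Alexander polynomials are genuine polynomials with positive constant term, so the rational relation becomes an equality of polynomials.

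Next I would record the lowest-order terms of each factor. Each torus knot is an $L$--space knot, so by Theorem \ref{facts} (c) its Alexander polynomial has lowest-order terms $1-t$; the same holds for $\Delta_J$ since $J$ is an $L$--space knot. Writing $\mathcal{O}(t^2)$ for terms of degree at least $2$, we therefore have $\Delta_{T(p,q)}(t) = 1 - t + \mathcal{O}(t^2)$, $\Delta_{T(r,s)}(t) = 1 - t + \mathcal{O}(t^2)$, and $\Delta_J(t) = 1 - t + \mathcal{O}(t^2)$.

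I would then compare the coefficient of $t$ on the two sides of the displayed identity. Raising $1 - t + \mathcal{O}(t^2)$ to the $m$th power gives $1 - mt + \mathcal{O}(t^2)$ on the left, while on the right the product is $(1-t)(1-nt) + \mathcal{O}(t^2) = 1 - (n+1)t + \mathcal{O}(t^2)$. Matching linear coefficients yields $-m = -(n+1)$, hence $m = n+1$.

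I do not expect a genuine obstacle here: Proposition \ref{alex poly} does all the heavy lifting, and the corollary falls out as a one-line consequence once one observes that Theorem \ref{facts} (c) forces the normalized lowest-order terms of \emph{every} $L$--space knot Alexander polynomial to be $1-t$. The only point deserving a moment's care is the passage from the rational identity to a polynomial identity, which is immediate from the conventions fixed in the Notation and Conventions.
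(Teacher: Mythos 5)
Your proposal is correct and follows essentially the same route as the paper's own proof: both invoke Proposition \ref{alex poly}, clear the denominator to get $\left(\Delta_{T(p,q)}(t)\right)^m = \Delta_J(t)\cdot\left(\Delta_{T(r,s)}(t)\right)^n$, apply Theorem \ref{facts}(c) to read off the lowest-order terms $1-t$ of each factor, and compare coefficients of $t$ to conclude $m = n+1$. There is nothing to add; the argument is complete as written.
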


\begin{proof}[Proof of Proposition \ref{alex poly}]
Suppose that $J$ is an $L$--space knot concordant to $K$.  Recall that $0<p<q$ and $0<r<s$.  Consider the Alexander polynomial of $K$.  It is a product of cyclotomic polynomials $\phi _c(t)$:
$$\Delta_K(t) = \left(\Delta_{T(p,q)}(t)\right)^m \left(\Delta_{T(r,s) }(t)\right)^{n} = \left(\prod _{\substack{h|p, \, \ell|q,\\ h, \ell \,\neq\, 1}}\phi_{h\ell}(t) \right)^m\cdot \left(\prod _{\substack{h|r, \, \ell|s,\\ h, \ell \,\neq\, 1}}\phi_{h\ell}(t) \right)^{n}.$$
Let
\begin{gather*}
C =  \{ c : \left. \phi_{c}(t) \mid \Delta_{T(r,s)}(t) \right. \text{ and } \left. \phi_{c}(t) \mid \Delta_{T(p,q)}(t) \right.\}, \\
C_{p,q} =  \{ c : \left. \phi_{c}(t) \nmid \Delta_{T(r,s)}(t) \right. \text{ and } \left. \phi_{c}(t) \mid \Delta_{T(p,q)}(t) \right.\}, \\
C_{r,s} =  \{ c : \left. \phi_{c}(t) \mid \Delta_{T(r,s)}(t) \right. \text{ and } \left. \phi_{c}(t) \nmid \Delta_{T(p,q)}(t) \right.\} .
\end{gather*}
By \cite{livingston18}, for every $c\in C$, the Levine--Tristram signature function of $K$, $\sigma_K(t)$, jumps by $-2(m-n)$ at $t=1/c$.  Therefore, since $J$ is concordant to $K$ and so their signature functions are equal away from roots of their Alexander polynomials, we would have that $\left(\phi_{c}(t)\right)^{|m-n|}$ divides $\Delta_J(t)$.  Similarly, if $c\in C_{r,s}$, then $\left(\phi_{c}(t)\right)^{n}$ divides $\Delta_J(t)$, or if $c\in C_{p,q}$, then $\left(\phi_{c}(t)\right)^{m}$ divides $\Delta_J(t)$.  Thus
\begin{align}
\text{deg}(\Delta_J(t)) &\geq |m-n|\sum_{c\in C} \text{deg}(\phi_c(t))+n\sum_{c\in C_{r,s}} \text{deg}(\phi_c(t)) +m\sum_{c\in C_{p,q}} \text{deg}(\phi_c(t)) \nonumber \\
&=(m+n-2\min\{m,n\})\sum_{c\in C} \text{deg}(\phi_c(t))+n\sum_{c\in C_{r,s}} \text{deg}(\phi_c(t)) +m\sum_{c\in C_{p,q}} \text{deg}(\phi_c(t)) \nonumber  \\
&=\text{deg}(\Delta_K(t)) - 2\min\{m,n\}\sum_{c\in C} \text{deg}(\phi_c(t)).
\label{deg ineq}\end{align}
On the one hand, we know that for $L$--space knots the degree of the Alexander polynomial is equal to twice the $\tau$ invariant of the knot.  So, since $J$ is concordant to $K$, we have that
\begin{equation}\text{deg}(\Delta_J(t)) = 2\tau(J) = 2\tau(K) = 2\left(m\cdot\frac{(p-1)(q-1)}{2}-n\cdot\frac{(r-1)(s-1)}{2}\right).\label{deg J}\end{equation}
On the other hand, we have
$$\text{deg}(\Delta_K(t)) = m\left(\text{deg}\Delta_{T(p,q)}(t)\right) + n\left(\text{deg}\Delta_{T(r,s)}(t)\right) = m(p-1)(q-1)+n(r-1)(s-1).$$
Therefore, by Equation \eqref{deg ineq},
\begin{align*}\sum_{c\in C} \text{deg}(\phi_c(t)) &\geq \frac{\text{deg}(\Delta_K(t))-\text{deg}(\Delta_J(t))}{2\min\{m,n\}}=\frac{2n(r-1)(s-1)}{2\min\{m,n\}} \\ &\geq (r-1)(s-1) = \text{deg}(\Delta_{T(r,s)}(t)).\end{align*} 
However, by the definition of $C$, 
$$\sum_{c\in C} \text{deg}(\phi_c(t)) \leq \text{deg}(\Delta_{T(r,s)}(t)),$$
and so
$$\sum_{c\in C} \text{deg}(\phi_c(t)) = \text{deg}(\Delta_{T(r,s)}(t)).$$
Thus it must be that $\Delta_{T(r,s)}(t) \mid \Delta_{T(p,q)}(t)$.  Note that this implies that $rs$ divides $pq$ since $\phi_{rs}(t)$ divides $\Delta_{T(r,s)}(t)$.   Also, $\Delta_{T(r,s)}(t) \mid \Delta_{T(p,q)}(t)$ implies that  $\sigma_K(t)$ jumps by $-2m$ at $1/c$ for each $\phi_c(t)$ dividing $\Delta_{T(p,q)}(t)$ and not dividing $\Delta_{T(r,s)}(t)$ and jumps by $-2(m-n)$ at $1/c$ for each $\phi_c(t)$ dividing both.  So we have that
$$\frac{\left(\Delta_{T(p,q)}(t)\right)^m}{\left(\Delta_{T(r,s)}(t)\right)^{n}} \text{ divides } \Delta_J(t).$$
From Equation \eqref{deg J}, we see that 
$$\deg\left(\frac{\left(\Delta_{T(p,q)}(t)\right)^m}{\left(\Delta_{T(r,s)}(t)\right)^{n}}\right) = \deg\left(\Delta_J(t)\right)$$
and therefore
\begin{equation*} \frac{\left(\Delta_{T(p,q)}(t)\right)^m}{\left(\Delta_{T(r,s)}(t)\right)^{n}} = \Delta_J(t). \qedhere \end{equation*}
\end{proof}

\begin{proof}[Proof of Corollary \ref{m=n+1}]
Suppose that $J$ is an $L$--space knot concordant to $K=mT(p,q)\,\#\, {-nT(r,s)}$,  where $m,n >0$ and $r \leq p$.  Then by Proposition \ref{alex poly}, we know that
\begin{equation}
\frac{\left(\Delta_{T(p,q)}(t)\right)^m}{\left(\Delta_{T(r,s)}(t)\right)^{n}} = \Delta_J(t).
\label{alex poly eqn}
\end{equation}
Since $T(p,q)$, $T(r,s)$, and $J$ are all $L$--space knots, Theorem \ref{facts}(c) and Equation \eqref{alex poly eqn} imply that 
$$\frac{(1-t+\mathcal{O}(t^2))^m}{(1-t+\mathcal{O}(t^2))^n} = 1-t+\mathcal{O}(t^2).$$
Rearranging and expanding, we see that
$$(1-t+\mathcal{O}(t^2))^m = (1-t+\mathcal{O}(t^2))(1-t+\mathcal{O}(t^2))^n$$
$$1-mt+\mathcal{O}(t^2) = (1-t+\mathcal{O}(t^2))(1-nt+\mathcal{O}(t^2))$$
$$1-mt+\mathcal{O}(t^2) = 1-(n+1)t+\mathcal{O}(t^2).$$
So it must be that $m = n+1$.
\end{proof}

Before proving Theorem \ref{main theorem}, we show that it holds in the case where $m=2$.

\begin{lemma} [$m=2$ case] \label{m=2}
The knot $K =2T(p,q)\,\#\, {-T(r,s)}$,  where $r\leq p$ and $rs$ divides $pq$, is not concordant to an $L$--space knot unless $T(p,q) = T(r,s)$.
\end{lemma}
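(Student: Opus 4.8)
The plan is to suppose, for contradiction, that $J$ is an $L$--space knot concordant to $K=2T(p,q)\,\#\,{-T(r,s)}$ with $T(p,q)\neq T(r,s)$, and to extract the contradiction from the Alexander polynomial alone. Write $A=\Delta_{T(p,q)}$ and $B=\Delta_{T(r,s)}$. Applying Proposition~\ref{alex poly} with $m=2$, $n=1$ gives that $B$ divides $A$ and that $\Delta_J=A^2/B$ (in particular a genuine polynomial). Because $J$ is an $L$--space knot, $\Delta_J(t)=\sum_i(-1)^i t^{a_i}$ has nonzero coefficients all equal to $\pm1$ and strictly alternating in sign, starting from the $+1$ constant term. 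So it suffices to show that the low--order coefficients of $A^2/B$ violate this alternation whenever $(p,q)\neq(r,s)$.

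The key step compares $A$ and $B$ at the bottom. Put $E=A-B$ and expand
\[
\Delta_J=\frac{(B+E)^2}{B}=B+2E+\frac{E^2}{B},
\]
so that $\Delta_J\equiv 2A-B \pmod{t^{2D}}$, where $D$ denotes the lowest degree in which $A$ and $B$ disagree. Both $A$ and $B$ begin $1-t$ and then follow the staircases prescribed by the semigroup formula $\Delta_{T(a,b)}(t)=\sum_{j\in S_{a,b}}(t^j-t^{j+1})$. A short computation identifies $D$ and the size of the first discrepancy: when $r<p$ one gets $D=r$ (here $B$ resumes with a $+1$ at $t^r$ while $A$ is still zero), while when $r=p$ the conditions $s\mid q$ and $s<q$ force $D=s$; in every case
\[
[t^D]A-[t^D]B=-1 .
\]

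Granting this, the contradiction is quick. Since $A$ and $B$ agree below degree $D$, we have $\Delta_J\equiv A$ there, so $\Delta_J$ alternates correctly up to degree $D$. At degree $D$, as $[t^D]A-[t^D]B=-1$ with both coefficients in $\{-1,0,1\}$, there are only two possibilities. If $([t^D]A,[t^D]B)=(-1,0)$, then $[t^D]\Delta_J=2(-1)-0=-2$, which already violates the $\pm1$ constraint. If $([t^D]A,[t^D]B)=(0,1)$, then $[t^D]\Delta_J=-1$; but $[t^D]B=+1$ and the alternation of the $L$--space polynomial $B$ force its preceding nonzero coefficient to be $-1$, and since $A$ and $B$ agree below $D$ this is also the preceding nonzero coefficient of $\Delta_J$, so $\Delta_J$ has two consecutive nonzero coefficients equal to $-1$. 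Either way alternation fails, the desired contradiction. When $(p,q)=(r,s)$ the argument collapses, since then $E=0$ and $\Delta_J=A$.

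The routine content is the bookkeeping with the bottoms of $A$ and $B$. The one load-bearing assertion is $[t^D]A-[t^D]B=-1$ --- that the smaller torus knot polynomial is the first to deviate and does so by exactly one --- and I expect the case $r=p$, where one locates $D=s$ and tracks the $t^{s-1}$ coefficient according to whether $p\mid s-1$, to be the main obstacle; the case $r<p$ is comparatively transparent.
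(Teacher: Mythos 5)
Your proof is correct and takes essentially the same route as the paper's: both invoke Proposition~\ref{alex poly} to write $\Delta_J=\Delta_{T(p,q)}^2/\Delta_{T(r,s)}$ and then contradict the alternating $\pm 1$ coefficient structure of an $L$--space knot's Alexander polynomial by examining low-order terms, with the identical case split ($r<p$ giving the first discrepancy at degree $r$; $r=p$ giving it at degree $s$, subdivided according to whether $p\mid s-1$, which matches the paper's $i=1$ versus $i\neq 1$ cases). Your congruence $\Delta_J\equiv 2\Delta_{T(p,q)}-\Delta_{T(r,s)} \pmod{t^{2D}}$ is simply a tidier way to organize the coefficient bookkeeping that the paper carries out by explicit long division of the rational-function formulas.
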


\begin{proof}
Suppose for contradiction that $K$ is concordant to an $L$--space knot $J$.  Consider first the case of $r<p$.  Applying Proposition \ref{alex poly}, we have that 
\begin{align*}\Delta_J(t) &= \frac{\left(\Delta_{T(p,q)}(t)\right)^2}{\Delta_{T(r,s)}(t)} = \left(\frac{(t^{pq}-1)(t-1)}{(t^p-1)(t^{q}-1)}\right)^2\frac{(t^r-1)(t^s-1)}{(t^{rs}-1)(t-1)}\\
&=\frac{(t^{pq}-1)^2(t-1)(t^r-1)(t^s-1)}{(t^{p}-1)^2(t^q-1)^2(t^{rs}-1)} = \frac{-1+t+t^r+\mathcal{O}(t^{r+1})}{-1+\mathcal{O}(t^{\,\min\{rs, p\}})}.\end{align*}
Rearranging, we get
$$(-1+\mathcal{O}(t^{\,\min\{rs, p\}}))\Delta_J(t) = -1+t+t^r+\mathcal{O}(t^{r+1}).$$
 Equating coefficients and noting that $r<\min\{rs, p\}$ in this case, the Alexander polynomial of $J$ is 
$$\Delta_J(t) = 1-t-t^r +\text{higher degree terms}.$$
Thus $J$ does not have the Alexander polynomial of an $L$--space knot, and we have reached a contradiction.

Now, consider the case of $r=p$.  Note that, since we are assuming $rs$ divides $pq$, we now have $s\mid q$ and so $r=p<s<q$.  Again, applying Proposition \ref{alex poly}, we have that 
\begin{align*}\Delta_J(t) &= \frac{\left(\Delta_{T(p,q)}(t)\right)^2}{\Delta_{T(p,s)}(t)} =\frac{(t^{pq}-1)^2(t-1)(t^s-1)}{(t^{p}-1)(t^{q}-1)^2(t^{ps}-1)}\\
&= \displaystyle{\frac{1-t-t^s+t^{s+1}+\mathcal{O}(t^{pq})}{1-t^{p}+\mathcal{O}(t^{\,\min\{q,ps\}})}}
\end{align*}
Rearranging, we get
$$(1-t^{p}+\mathcal{O}(t^{\,\min\{q,ps\}}))\Delta_J(t) =1-t-t^s+t^{s+1}+\mathcal{O}(t^{pq}).$$
Let $s=kp+i$ where $1\leq i <p$.  If $i\neq 1$, by equating coefficients we see that the Alexander polynomial of $J$ is
$$\Delta_J(t) = 1-t+t^p-t^{p+1}+t^{2p}-t^{2p+1}+\cdots +t^{kp}-t^{kp+1}-t^s +\text{higher degree terms}.$$
Thus $J$ does not have the Alexander polynomial of an $L$--space knot.
If $i = 1$,  by equating coefficients we see that the Alexander polynomial of $J$ is
$$\Delta_J(t) = 1-t+t^p-t^{p+1}+\cdots +t^{(k-1)p}-t^{(k-1)p+1}+t^{kp}-2t^s +\text{higher degree terms}.$$
Again, $J$ does not have the Alexander polynomial of an $L$--space knot, and we have reached a contradiction.
\end{proof}
Finally, we prove Theorem \ref{main theorem}.

\begin{proof}[Proof of Theorem \ref{main theorem}]
By Propositions \ref{no negs}, \ref{p<r}, and \ref{alex poly}, Corollary \ref{m=n+1}, and Lemma \ref{m=2}, we can consider only the case of  $K =mT(p,q)\,\#\, {-(m-1)T(r,s)}$,  where $m\geq 3$, $r\leq p$, and $rs$ divides $pq$.  Suppose that $J$ is an $L$--space knot concordant to $K$.  We apply Proposition \ref{alex poly} to have that 
$$\Delta_J(t) = \frac{\left(\Delta_{T(p,q)}(t)\right)^m}{\left(\Delta_{T(r,s)}(t)\right)^{m-1}} = \left(\frac{(t^{pq}-1)(t-1)}{(t^p-1)(t^q-1)}\right)^m\left(\frac{(t^r-1)(t^s-1)}{(t^{rs}-1)(t-1)}\right)^{m-1}$$
$$=\left(\frac{t^{pq}-1}{t^{rs}-1}\cdot \frac{t^{rs}-t^r-t^s+1}{t^{pq}-t^p-t^q+1}\right) ^{m-1}\left(\frac{(t^{pq}-1)(t-1)}{(t^p-1)(t^q-1)}\right)$$
$$=\frac{(t^{pq-rs}+t^{pq-2rs}+\cdots + t^{rs}+1)^{m-1}(t^{rs}-t^r-t^s+1)^{m-1}}{(t^{pq}-t^p-t^q+1)^{m-1}}\cdot \Delta_{T(p,q)}(t),$$
where the last equality is due to the fact that $rs\mid pq$.  Rearranging, expanding, and focusing on lower degree terms, we get
\begin{gather}
(t^{pq}-t^p-t^q+1)^{m-1}\Delta_J(t) = (t^{pq-rs}+t^{pq-2rs}+\cdots + t^{rs}+1)^{m-1}(t^{rs}-t^r-t^s+1)^{m-1}\Delta_{T(p,q)}(t),\nonumber \\
(1-t^p-t^q+t^{pq})^{m-1} \Delta_J(t) = (1-t^r-t^s+2t^{rs}+\mathcal{O}(t^{rs+r}) )^{m-1}\Delta_{T(p,q)}(t), \label{compare2} \\
(1-(m-1)t^p +\mathcal{O}(t^{p+1}))\Delta_J(t) = (1-(m-1)t^r +\mathcal{O}(t^{r+1}))\Delta_{T(p,q)}(t). \label{compare} 
\end{gather}
(Recall that, in our notation, $p<q$, $r<s$, gcd$(p,q) =1$, and gcd$(r,s) = 1$.)  Let $a_i$ and $b_i$ represent the coefficient of $t^i$ in $\Delta_J(t)$ and $\Delta_{T(p,q)}(t)$ respectively and note that $a_i, b_i \in \{-1, 0, 1\}$, $a_0 = b_0 = 1$, and $a_1 = b_1 = -1$.  Then the left-hand side of Equation \eqref{compare} is 
$$1-t+a_2t^2+\cdots +a_{p-1}t^{p-1}+(-(m-1)+a_p)t^p+\mathcal{O}(t^{p+1}) $$
while the right-hand side is 
$$1-t+b_2t^2+\cdots +b_{r-1}t^{r-1}+(-(m-1)+b_r)t^r+\mathcal{O}(t^{r+1}).$$
This implies that $a_i = b_i$ for $1\leq i \leq r-1$ since we have assumed $r\leq p$.  If $r<p$, then $-(m-1)+b_r = a_r$.  Recalling that 
$$\Delta_{T(p,q)}(t) = \sum_{d\in S_{p,q}} (t^d-t^{d+1})$$ where $S_{p,q} = \{xp+yq \mid x, y \in \Z_{\geq 0}\}$,  note that $b_r = 0$ when $r<p<q$.  Therefore, in this case, $-(m-1)= a_r$ and so $m\leq 2$.  Thus we are in the trivial case or the case of Lemma \ref{m=2}.

If $r=p$, then $\gcd (p, s) =1$ and $p<s<q$ (otherwise we are in the trivial case since we have assumed that $rs$ divides $pq$).  Thus $s\neq xp+yq$ for any choices of integers $x,y>0$ and so the polynomial $(1-t^p-t^q+t^{pq})^{m-1}$ has no $t^s$ term.  Let $s=hp+\ell$ with $h\geq 1$ and $0<\ell<p$.  Using the multinomial theorem to expand in each side of Equation \eqref{compare2}, we get
\begin{equation} \begin{split}
\left(1-\sum_{k=1}^h\binom{m-1}{k}t^{kp} +\mathcal{O}(t^{s+1})\right)&\Delta_J(t)\\ &= \left(1-\sum_{k=1}^h\binom{m-1}{k}t^{kp}-(m-1)t^s +\mathcal{O}(t^{s+1})\right)\Delta_{T(p,q)}(t). \end{split}\label{compare3}
\end{equation}
Again, let $a_i$ and $b_i$ represent the coefficient of $t^i$ in $\Delta_J(t)$ and $\Delta_{T(p,q)}(t)$ respectively.  We claim that for $0\leq i \leq s-1$, we have that $a_i = b_i$.   To show this, let $i^*\geq 0$ be the smallest value such that $a_{i^*}\neq b_{i^*}$. Note that for $0\leq i \leq s-1$, the coefficient of $t^{i}$ on each side of Equation \eqref{compare3}  is
$$ a_{i} - \sum_{k=1}^{\lfloor{i/p}\rfloor} \binom{m-1}{k}a_{i-kp} = b_{i} -\sum_{k=1}^{\lfloor{i/p}\rfloor} \binom{m-1}{k}b_{i-kp}.$$
By minimality of $i^*$,  $a_{i^*-kp}=b_{i^*-kp}$ for $k\geq1$.  This implies that $i^*\geq s$ since, otherwise, the equation above implies  $a_{i^*}= b_{i^*}$.

Now, we compare the coefficients of $t^s$ in Equation \eqref{compare3}:
\begin{gather*}
a_s -\sum_{k=1}^h \binom{m-1}{k}a_{s-kp} = b_s -\sum_{k=1}^h \binom{m-1}{k}b_{s-kp} - (m-1),
\end{gather*}
which implies that $a_s=b_s-(m-1)$ since $a_i = b_i$ for all $i<s$.
Again, because $\Delta_{T(p,q)}(t) = \sum_{d\in S_{p,q}} (t^d-t^{d+1})$ and $s\not\in S_{p,q}$, we have that $b_s = 0$ or $-1$ .  Thus, $m = b_s-a_s+1 \leq 2$  and we are in the trivial case or the case of Lemma \ref{m=2}.
\end{proof}


\section{Towards the more general case}
In this section, we state and prove some results which restrict concordances from more general connected sums of torus knots to $L$--space knots.  First, we give generalizations of Proposition \ref{alex poly} and Corollary \ref{m=n+1}.

\begin{proposition}
If the knot $$K = T(p_1, q_1)\,\#\, T(p_2, q_2)\,\#\,\cdots\,\#\, T(p_m, q_m)\,\#\, {-T(p_1', q_1')}\,\#\, {-T(p_2', q_2')}\,\#\,\cdots \,\#\, {-T(p_n', q_n')},$$  where $m,n\geq 1$, is concordant to an $L$--space knot $J$, then 
$$\frac{\prod_{i=1}^m\Delta_{T(p_i,q_i)}(t)}{\prod_{i=1}^n\Delta_{T(p_i',q_i')}(t)}= \Delta_J(t).$$
\label{gen alex poly}
\end{proposition}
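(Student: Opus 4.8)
The plan is to follow the strategy of Proposition~\ref{alex poly}, replacing the three-way partition of cyclotomic indices by counting functions that track how many of the positive and negative torus knots contribute each factor. Concretely, write $\Delta_K(t)=\prod_{i=1}^m\Delta_{T(p_i,q_i)}(t)\cdot\prod_{j=1}^n\Delta_{T(p_j',q_j')}(t)$ (the Alexander polynomial is multiplicative under connected sum and insensitive to taking mirrors, so the negative summands contribute their ordinary Alexander polynomials), and for each integer $c\geq 2$ set
\[
\alpha_c=\#\{\,i : \phi_c(t)\mid\Delta_{T(p_i,q_i)}(t)\,\},\qquad \beta_c=\#\{\,j : \phi_c(t)\mid\Delta_{T(p_j',q_j')}(t)\,\}.
\]
Because $\gcd(p,q)=1$ makes the map $(h,\ell)\mapsto h\ell$ injective on the divisor pairs appearing in the factorization $\Delta_{T(p,q)}(t)=\prod_{h\mid p,\ \ell\mid q,\ h,\ell\neq 1}\phi_{h\ell}(t)$, each $\Delta_{T(p,q)}$ is a squarefree product of distinct cyclotomic polynomials; hence the multiplicity of $\phi_c$ in $\Delta_K$ is exactly $\alpha_c+\beta_c$.

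Next I would compute the signature jumps. By Livingston's result each positive factor contributes a jump of $-2$ and each negative factor a jump of $+2$ at $t=1/c$, so by additivity $\sigma_K(t)$ jumps by $-2(\alpha_c-\beta_c)$ there. Since $J$ is concordant to $K$, $\sigma_J=\sigma_K$ away from the (common) roots of the Alexander polynomials, so $\sigma_J$ has the same jump at each $t=1/c$. The standard bound that the jump of a Levine--Tristram signature at a root cannot exceed twice its multiplicity — the same bound already used in Proposition~\ref{alex poly} — then shows that $\phi_c$ divides $\Delta_J$ to order at least $|\alpha_c-\beta_c|$, whence
\[
\deg\Delta_J(t)\ \geq\ \sum_{c}|\alpha_c-\beta_c|\,\deg\phi_c(t).
\]

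The two degree computations are then played against each other. On the other hand, Theorem~\ref{facts}(a), additivity of $\tau$, and $2\tau(T(a,b))=\deg\Delta_{T(a,b)}$ give
\[
\deg\Delta_J(t)=2\tau(J)=2\tau(K)=\sum_i\deg\Delta_{T(p_i,q_i)}-\sum_j\deg\Delta_{T(p_j',q_j')}=\sum_c(\alpha_c-\beta_c)\deg\phi_c(t).
\]
Comparing with the signature inequality forces $\sum_c|\alpha_c-\beta_c|\deg\phi_c=\sum_c(\alpha_c-\beta_c)\deg\phi_c$, and since $\deg\phi_c>0$ this can hold only if $\alpha_c\geq\beta_c$ for every $c$. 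In particular the quotient $\prod_i\Delta_{T(p_i,q_i)}/\prod_j\Delta_{T(p_j',q_j')}=\prod_c\phi_c^{\alpha_c-\beta_c}$ is an honest monic polynomial, and the per-index divisibility shows it divides $\Delta_J$. As both are monic of the same degree, they are equal, which is the claim.

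The main obstacle will be justifying the signature-to-multiplicity step cleanly: I must confirm that the jump of $\sigma_J$ at the single parameter value $t=1/c$ is controlled by the multiplicity of the primitive root $e^{2\pi i/c}$ in $\Delta_J$ — equivalently by the power of $\phi_c$ dividing $\Delta_J$ — and that this is precisely the inequality already invoked for a single pair of torus knots. Everything else is bookkeeping with the counting functions $\alpha_c,\beta_c$, resting on the elementary observation that squarefreeness of each torus-knot Alexander polynomial makes these counts equal to the cyclotomic multiplicities in $\Delta_K$.
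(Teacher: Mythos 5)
Your proposal is correct and follows essentially the same route as the paper's proof: your counting functions $\alpha_c,\beta_c$ are exactly the paper's multiplicities $k^+(c),k^-(c)$ (the squarefreeness observation makes the two definitions agree), and your termwise comparison of $\sum_c|\alpha_c-\beta_c|\deg\phi_c$ with $\sum_c(\alpha_c-\beta_c)\deg\phi_c$ is an equivalent repackaging of the paper's manipulation via $\min\{k^+(c),k^-(c)\}$ and $\deg\Delta_{K^-}$. The signature-to-multiplicity step you flag as the main obstacle is precisely the same standard bound the paper invokes without further elaboration, so nothing additional is needed.
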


\begin{corollary}
If the knot   $$K = T(p_1, q_1)\,\#\, T(p_2, q_2)\,\#\,\cdots\,\#\, T(p_m, q_m)\,\#\, {-T(p_1', q_1')}\,\#\, {-T(p_2', q_2')}\,\#\,\cdots \,\#\, {-T(p_n', q_n')},$$  where $m,n\geq 1$, is concordant to a nontrivial $L$--space knot, then $m = n+1$.
\label{gen m=n+1}
\end{corollary}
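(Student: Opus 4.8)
The plan is to mimic the proof of Corollary \ref{m=n+1} almost verbatim, replacing the single factors $\Delta_{T(p,q)}$ and $\Delta_{T(r,s)}$ by the products appearing in the numerator and denominator. First I would invoke Proposition \ref{gen alex poly}: assuming $J$ is a nontrivial $L$--space knot concordant to $K$, we obtain the identity
$$\frac{\prod_{i=1}^m\Delta_{T(p_i,q_i)}(t)}{\prod_{i=1}^n\Delta_{T(p_i',q_i')}(t)}= \Delta_J(t).$$
Every knot appearing here---each torus knot $T(p_i,q_i)$ and $T(p_i',q_i')$ (which are nontrivial by our standing convention $1<a<b$) as well as the nontrivial $L$--space knot $J$---has an Alexander polynomial whose lowest order terms are $1-t$, by Theorem \ref{facts}(c).

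The key observation is that although the various torus knots have different higher-order Alexander coefficients, the products only see their common leading behavior to first order. Writing each factor as $1-t+\mathcal{O}(t^2)$ and expanding, the coefficient of $t^1$ in a product of $k$ such polynomials is the sum of the individual $t^1$--coefficients, each equal to $-1$; hence the product is $1-kt+\mathcal{O}(t^2)$. Applying this with $k=m$ to the numerator and $k=n$ to the denominator gives
$$\frac{1-mt+\mathcal{O}(t^2)}{1-nt+\mathcal{O}(t^2)} = 1-t+\mathcal{O}(t^2).$$

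Finally I would clear the denominator and compare coefficients through degree one. Multiplying out,
$$1-mt+\mathcal{O}(t^2) = (1-t+\mathcal{O}(t^2))(1-nt+\mathcal{O}(t^2)) = 1-(n+1)t+\mathcal{O}(t^2),$$
and equating the coefficient of $t$ forces $m=n+1$. There is no real obstacle here beyond the bookkeeping already handled in Corollary \ref{m=n+1}; the only points worth flagging are the hypothesis that $J$ be \emph{nontrivial}, which is exactly what guarantees that $\Delta_J$ has the leading form $1-t$ rather than the constant polynomial of the unknot, and the standing convention $1<a<b$, which ensures the same leading form for every torus knot factor in the numerator and denominator.
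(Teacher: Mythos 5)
Your proposal is correct and follows essentially the same argument as the paper: invoke Proposition \ref{gen alex poly}, use Theorem \ref{facts}(c) to write every factor as $1-t+\mathcal{O}(t^2)$, expand the products to get $1-mt+\mathcal{O}(t^2)$ and $1-nt+\mathcal{O}(t^2)$, clear the denominator, and compare the coefficients of $t$ to conclude $m=n+1$. Your explicit remark that nontriviality of $J$ is what rules out $\Delta_J = 1$ is a point the paper leaves implicit, but the proofs are otherwise identical.
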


\begin{proof}[Proof of Proposition \ref{gen alex poly}]
Suppose that $J$ is an $L$--space knot concordant to $K$.  Consider the Alexander polynomial of $K$.  It is a product of cyclotomic polynomials $\phi _c(t)$:
$$\Delta_K(t) = \prod_{i=1}^m \Delta_{T(p_i,q_i)}(t)\cdot \prod_{i=1}^n \Delta_{T(p_i',q_i')}(t) =\left(\prod_{i=1}^m\,\prod _{\substack{h|p, \, \ell|q,\\ h,\ell\,\neq \,1}}\phi_{h\ell}(t)\right) \cdot \left(\prod_{i=1}^n \,\prod _{\substack{h|p_i', \, \ell|q_i',\\ h,\ell\,\neq\, 1}}\phi_{h\ell}(t)\right).$$
Let $K^+ = T(p_1, q_1)\,\#\, T(p_2, q_2)\,\#\,\cdots\,\#\, T(p_m, q_m)$ and $K^- = T(p_1', q_1')\,\#\, T(p_2', q_2')\,\#\,\cdots \,\#\, T(p_n', q_n')$.  Note that $K = K^+\,\#\, {-K^-}$.
Let 
$$k^{\pm}(c) = \max \{k\mid \left(\phi_c(t)\right)^k \text{ divides } \Delta_{K^\pm}(t)\}$$ 
and note that $$\sum_{c>1} k^\pm(c)\deg (\phi_c(t)) = \deg\left(\Delta_{K^\pm}(t)\right).$$

By \cite{livingston18}, the Levine--Tristram signature function of $K$, $\sigma_K(t)$, jumps by $-2(k^+(c)-k^-(c))$ at $t=1/c$.  Since $J$ is concordance to $K$, we  then have that $(\phi_c(t))^{\left|k^+(c)-k^-(c)\right|}$ divides $\Delta_J(t)$.  Thus,
\begin{align*}
\text{deg}(\Delta_J(t)) &\geq \sum_{c>1} \left|k^+(c)-k^-(c)\right| \deg (\phi_c(t))\\
&= \sum_{c>1}\left(k^+(c)+k^-(c)-2\min\{k^+(c), k^-(c)\}\right)\deg (\phi_c(t))\\
&= \sum_{c>1}\left(k^+(c)+k^-(c)\right)\deg (\phi_c(t))-2\sum_{c>1}\min\{k^+(c), k^-(c)\}\deg (\phi_c(t))\\
&= \deg\left(\Delta_K(t)\right)-2\sum_{c>1}\min\{k^+(c), k^-(c)\}\deg (\phi_c(t)).
\end{align*}

On the one hand, we know that for $L$--space knots the degree of the Alexander polynomial is equal to twice the $\tau$ invariant of the knot.  So, since $J$ is concordant to $K$, we have that
\begin{equation} \label{degJ}\text{deg}(\Delta_J(t)) = 2\tau(J) = 2\tau(K) = 2\left(\sum_{i=1}^m\frac{(p_i-1)(q_i-1)}{2}-\sum_{i=1}^n\frac{(p_i'-1)(q_i'-1)}{2}\right).\end{equation}
On the other hand, we have
\begin{equation} \label{degK}\text{deg}(\Delta_K(t)) = \sum_{i=1}^m(p_i-1)(q_i-1)+\sum_{i=1}^n(p_i'-1)(q_i'-1).\end{equation}
Therefore,
\begin{align*} 
\sum_{c>1}\min\{k^+(c), k^-(c)\}\deg (\phi_c(t)) &\geq \frac{1}{2}\left(\text{deg}(\Delta_K(t))-\text{deg}(\Delta_J(t))\right)\\
 &= \sum_{i=1}^n(p_i'-1)(q_i'-1)\\
 &= \text{deg}(\Delta_{K^-}(t)).
\end{align*}
Notice also that 
$$\sum_{c>1}\min\{k^+(c), k^-(c)\}\deg (\phi_c(t)) \leq \sum_{c>1} k^-(c)\deg (\phi_c(t)) = \deg\left(\Delta_{K^-}(t)\right).$$
Thus, 
$$\sum_{c>1} \min\{k^+(c), k^-(c)\}\deg (\phi_c(t)) = \deg\left(\Delta_{K^-}(t)\right)$$
which implies that $k^+(c)\geq k^-(c)$ for all $c>1$.  Thus $\Delta_{K^-}(t)$ divides $\Delta_{K^+}(t).$
In addition, $k^+(c)\geq k^-(c)$ implies that $$ \prod_{c>1}(\phi_c(t))^{\left|k^+(c)-k^-(c)\right|} = \prod_{c>1}(\phi_c(t))^{\left(k^+(c)-k^-(c)\right)} =  \frac{\Delta_{K^+}(t)}{\Delta_{K^-}(t)},$$ 
and we can conclude that 
$$\frac{\Delta_{K^+}(t)}{\Delta_{K^-}(t)} \text{ divides } \Delta_J(t).$$
Comparing the degrees of the polynomials from Equations \eqref{degJ} and \eqref{degK}, we find that
$$\frac{\Delta_{K^+}(t)}{\Delta_{K^-}(t)} = \Delta_J(t),$$
as asserted.
\end{proof}

\begin{proof}[Proof of Corollary \ref{gen m=n+1}]
Suppose that $J$ is an $L$--space knot concordant to $K$.  Then by Proposition \ref{gen alex poly} we know that
\begin{equation}
\frac{\prod_{i=1}^m\Delta_{T(p_i,q_i)}(t)}{\prod_{i=1}^n\Delta_{T(p_i',q_i')}(t)}= \Delta_J(t).
\label{gen alex poly eqn}
\end{equation}
Since $T(p_i,q_i)$, $T(p_i',q_i')$, and $J$ are all $L$--space knots, Theorem \ref{facts}(c) and Equation \eqref{gen alex poly eqn} imply that 
$$\frac{\prod_{i=1}^m(1-t+\mathcal{O}(t^2))}{\prod_{i=1}^n(1-t+\mathcal{O}(t^2))}= 1-t+\mathcal{O}(t^2).$$
Rearranging and expanding, we see that
$$\prod_{i=1}^m(1-t+\mathcal{O}(t^2)) = (1-t+\mathcal{O}(t^2))\prod_{i=1}^n(1-t+\mathcal{O}(t^2))$$
$$1-mt+\mathcal{O}(t^2) = (1-t+\mathcal{O}(t^2))(1-nt+\mathcal{O}(t^2))$$
$$1-mt+\mathcal{O}(t^2) = 1-(n+1)t+\mathcal{O}(t^2).$$
So it must be that $m = n+1$.
\end{proof}

Lastly, we use the following result of Feller and Krcatovich to give a condition on the Upsilon function of 
$$K = T(p_1, q_1)\,\#\, T(p_2, q_2)\,\#\,\cdots\,\#\, T(p_m, q_m)\,\#\, {-T(p_1', q_1')}\,\#\, {-T(p_2', q_2')}\,\#\,\cdots \,\#\, {-T(p_n', q_n')},$$
under which $K$ cannot be an $L$--space knot.
\begin{theorem}[Feller--Krcatovich, \cite{feller-krcatovich}]\label{feller-krcatovich}
Let $p<q$ be coprime integers.  Then 
$$\U_{T(p, \, q)}(t)=\U_{T(p,\, q-p)}(t)+\U_{T(p, \, p+1)}(t).$$
\end{theorem}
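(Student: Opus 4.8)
The plan is to reduce the identity to a combinatorial comparison of staircase complexes. Since $T(p,q)$, $T(p,q-p)$, and $T(p,p+1)$ are all $L$--space knots, each Upsilon function is computed from $\mathrm{CFK}^\infty$, which by the discussion in Section 2 is the staircase determined by the Alexander polynomial, equivalently by the numerical semigroup $S_{p,q}=\langle p,q\rangle$ (and likewise $\langle p,q-p\rangle$ and $\langle p,p+1\rangle$). First I would record the explicit Ozsv\'{a}th--Stipsicz--Szab\'{o} formula expressing $\U_K(t)$ for an $L$--space knot as the upper envelope (pointwise maximum) of a finite family of linear functions of $t$, one for each distinguished generator of the vertical homology of the staircase, with slope and intercept governed by the Alexander and Maslov gradings of that generator via the $t$--modified grading $M-tA$. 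This turns each of the three Upsilon functions into an explicit convex piecewise--linear object indexed by semigroup data.

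Because $\U$ is additive under connected sum, the asserted identity is equivalent to $\U_{T(p,q)}(t)=\U_{T(p,q-p)\,\#\,T(p,p+1)}(t)$; that is, the single torus knot and this connected sum, which are generally not concordant, nonetheless have the same Upsilon function. I would prove this by comparing derivatives. The function $\U'_K(t)$ is piecewise--constant, and for a pointwise sum its jumps are the union, with added multiplicities, of the jumps of the summands. The heart of the argument is therefore to show that the singularities of $\U'_{T(p,q)}$ are exactly the disjoint union of those of $\U'_{T(p,q-p)}$ and $\U'_{T(p,p+1)}$, with matching slope increments. Since all three are $L$--space knots, each $\U'$ is increasing by Theorem \ref{facts}(b), so all three functions are convex and it suffices to match their breakpoint data.

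The key input is a number--theoretic decomposition of $\langle p,q\rangle$ respecting the splitting $q=(q-p)+p$: writing a semigroup element as $ap+bq=(a+b)p+b(q-p)$ realizes $S_{p,q}=\{cp+b(q-p): c\ge b\ge 0\}$, which exhibits a correspondence between the gap/exponent data of $T(p,q)$ and a combination of the data for $T(p,q-p)$ and $T(p,p+1)$. I would push this correspondence through the OSS formula of the first paragraph to obtain the equality of breakpoint sets and of slope increments. As a sanity check, and as an alternative inductive route, one can iterate the relation along the Euclidean algorithm applied to $q/p$, reducing to the base cases $T(p,p+1)$ (computed explicitly in \cite{oss}) and $T(p,1)$, the unknot, with $\U=0$; the case $q=p+1$ already makes the identity trivial, and small cases such as $p=2$, where $\U_{T(2,q)}$ is an explicit tent-type function, can be verified directly.

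The main obstacle is precisely this matching of singularities. The earlier arguments in the paper only needed leading terms and degrees of Alexander polynomials, but here one must control the \emph{entire} staircase of $T(p,q)$ and show that its slope--change locus splits exactly as claimed rather than merely bounding its coarse invariants. Establishing that the breakpoints of the convex function $\U_{T(p,q)}$ interleave so as to reassemble as the sum of two strictly simpler Upsilon functions is the genuine work, and it is the reason the identity is attributed to Feller and Krcatovich rather than following formally from additivity of $\U$ together with Theorem \ref{facts}.
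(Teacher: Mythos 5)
A preliminary remark: the paper offers no proof of this statement to compare against --- it is imported verbatim from \cite{feller-krcatovich} and used as a black box in the proof of Theorem \ref{upsilon}. So your sketch must be judged on its own, and as it stands it has a genuine gap exactly where you say the ``genuine work'' lies. Your reduction is sound: by additivity of $\U$ under connected sum and convexity of all three functions (Theorem \ref{facts}(b)), the identity is equivalent to the statement that the jump locus and jump sizes of $\U'_{T(p,q)}$ coincide with the multiset union of those of $\U'_{T(p,q-p)}$ and $\U'_{T(p,p+1)}$. But the step that would establish this is precisely the one you defer: you record the (correct) identity $S_{p,q}=\{cp+b(q-p): c\ge b\ge 0\}$ and then say you would ``push this correspondence through the OSS formula,'' while your closing paragraph concedes that this matching is the heart of the matter. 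The semigroup identity alone does not produce it: the constraint $c\ge b$ couples the two generators, so $S_{p,q}$ is not exhibited as any direct combination of $\langle p,q-p\rangle$ and $\langle p,p+1\rangle$, and converting that coupling into an additive splitting of breakpoints and slope increments requires carrying out the actual computation with the staircase formula for $\U$ --- which is absent. Note also that your phrase ``disjoint union'' of singularities cannot be right in general: for example the theorem gives $\U_{T(5,11)}=\U_{T(5,6)}+\U_{T(5,6)}$, where the two summands have \emph{identical} breakpoints, so one must match jump multiplicities at coinciding singularities, not merely locations.

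A second, smaller defect: the proposed ``alternative inductive route'' along the Euclidean algorithm is circular. The identity for the pair $(p,q)$ \emph{is} the inductive step; knowing the identity for all pairs with smaller $q$ says nothing about the new function $\U_{T(p,q)}$, so induction cannot replace a per-instance argument. Induction on the continued fraction expansion is only useful \emph{after} the identity is proved, to express $\U_{T(p,q)}$ as a sum of the functions $\U_{T(p_i,p_i+1)}$ computed in \cite{oss} --- which is how the identity gets used, for instance, in the example following Theorem \ref{upsilon}.
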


\begin{theorem} Suppose that
$$K = T(p_1, q_1)\,\#\, T(p_2, q_2)\,\#\,\cdots\,\#\, T(p_m, q_m)\,\#\, {-T(p_1', q_1')}\,\#\, {-T(p_2', q_2')}\,\#\,\cdots \,\#\, {-T(p_n', q_n')}$$
with 
$$\U _K (t) = c_1\U _{T(a_1, a_1+1)}(t) + c_2\U _{T(a_2, a_2+1)}(t)+\cdots + c_s\U _{T(a_r, a_r+1)}(t)-c_1'\U _{T(a_1', a_1'+1)}(t) $$
$$- c_2'\U _{T(a_2', a_2'+1)}(t)-\cdots - c_s'\U _{T(a_s', a_s'+1)}(t)$$
where $c_i, c_i' >0$ for all $i$, $a_1>a_2>\cdots >a_r$, and $a_1'>a_2'>\cdots >a_s'$.  If there exists $a_i'$ such that $a_i'$ does not divide $a_j$ for any $j$, then $K$ is not concordant to an $L$--space knot.
\label{upsilon}
\end{theorem}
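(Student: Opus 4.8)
The plan is to use Theorem \ref{facts}(b) as the only obstruction: if $K$ were concordant to an $L$--space knot $J$, then since $\U$ is a concordance invariant we would have $\U_J(t)=\U_K(t)$, and hence $\U_K'(t)$ would be increasing on $[0,2]$. I would derive a contradiction by exhibiting a value of $t$ at which $\U_K'$ has a strict \emph{downward} jump. Everything then rests on locating the jumps of the pieces $\U_{T(a,a+1)}'$ and showing, via the divisibility hypothesis, that at one such point no upward jump can cancel a downward one.

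The key structural input I would establish (by analyzing $\mathrm{CFK}^\infty(T(a,a+1))$ as in the proof of Proposition \ref{p<r}, equivalently via the explicit computation of $\U_{T(p,p+1)}$ in \cite{oss}) is the following. The staircase gives
$$\U_{T(a,a+1)}(t)=\max_{0\le j\le a-1}\left(-j(j+1)-t\left(\tfrac{a(a-1)}{2}-aj\right)\right),$$
a maximum of lines whose slopes $-\tfrac{a(a-1)}{2}+aj$ increase in steps of exactly $a$. Consequently $\U_{T(a,a+1)}'$ is increasing (as it must be, by Theorem \ref{facts}(b)), and its \emph{only} points of discontinuity are $t=2k/a$ for $k=1,\dots,a-1$, at each of which it jumps up by exactly $a$. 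In particular, $\U_{T(a,a+1)}'$ jumps at a value $t_0$ \emph{if and only if} $t_0=2k/a$ for some positive integer $k$.

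With this in hand, I would choose an index $i$ for which $a_i'$ does not divide any $a_j$, and set $t_*=2/a_i'$. Since $\U$ is additive under connected sum,
$$\U_K'(t)=\sum_{j} c_j\,\U_{T(a_j,a_j+1)}'(t)-\sum_{l} c_l'\,\U_{T(a_l',a_l'+1)}'(t),$$
so the jump of $\U_K'$ at $t_*$ is the signed sum of the jumps of the constituents there. A positive term $\U_{T(a_j,a_j+1)}'$ jumps at $t_*=2/a_i'$ only if $2/a_i'=2k/a_j$, i.e.\ only if $a_i'\mid a_j$; by the choice of $i$ this never occurs, so the positive terms contribute nothing to the jump at $t_*$. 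On the other hand, $\U_{T(a_i',a_i'+1)}'$ does jump at $t_*=2\cdot 1/a_i'$ (its first singularity, where $a_i'\ge 2$ forces $t_*\le 1$), and since every $\U_{T(a_l',a_l'+1)}'$ jumps \emph{up}, each negative term contributes a nonpositive amount, with the $i$-th contributing the strictly negative amount $-c_i'a_i'$. Hence $\U_K'$ has a strict downward jump at $t_*$, contradicting that $\U_J'=\U_K'$ is increasing; therefore $K$ is not concordant to an $L$--space knot.

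The main obstacle is pinning down the singularity set of $\U_{T(a,a+1)}'$ precisely, and in particular the ``only if'' direction, that no singularities occur away from the points $2k/a$: it is exactly this that lets the divisibility hypothesis rule out any cancellation of the downward jump by the positive terms. Once that is in place, the remaining ingredients---additivity of $\U$, positivity of the individual jumps (immediate from Theorem \ref{facts}(b)), and the translation $2/a_i'=2k/a_j\Leftrightarrow a_i'\mid a_j$---are routine.
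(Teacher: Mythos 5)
Your proposal is correct and follows essentially the same route as the paper's proof: both use Theorem \ref{facts}(b), locate the singularities of $\U_{T(a,a+1)}'$ at the points $2k/a$, and show via the divisibility hypothesis that the downward jump at $t=2/a_i'$ coming from the negative terms cannot be cancelled by any positive term, contradicting that $\U_K'=\U_J'$ is increasing. The only difference is one of detail: you justify the singularity set with the explicit max-of-lines formula for $\U_{T(a,a+1)}$ from \cite{oss} (including the exact jump size $a$), whereas the paper cites the analysis of $\mathrm{CFK}^\infty(T(p,p+1))$ for the same fact.
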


\begin{proof}[Proof of Theorem \ref{upsilon}]
For an $L$--space knot $J$, $\U_J'(t)$ is an increasing, piecewise--constant function.  The Upsilon function $T(p, p+1)$ then has increasing, piecewise--constant derivative.  By analyzing $\mathrm{CFK}^{\infty}(T(p,p+1))$, we also know that $\U_{T(p,p+1)}'(t)$ has jumps in $[0,1]$ only at $t = 2i/p$ for $i$ such that $0<2i/p\leq 1$. Let $K^+ = T(p_1, q_1)\,\#\, T(p_2, q_2)\,\#\,\cdots\,\#\, T(p_m, q_m)$ and $K^- = T(p_1', q_1')\,\#\, T(p_2', q_2')\,\#\,\cdots \,\#\, T(p_n', q_n').$

 If $\U_K(t)$ is as stated in the theorem, with $a_i'$ such that $a_i'$ does not divide $a_j$ for any $j$, we have that $\U_{K^-}'(t)$ has a positive jump at $2/a_i'$.  Since no $a_j$ is divisible by $a_i'$, we have that $2k/a_j \neq 2/a_i'$ for any $j$.  Thus $\U_{K^+}'(t)$ is constant at $2/a_i'$.  This implies that $\U'_K(t) =\U'_{K^+}(t) -\U'_{K^-}(t)$ has a negative jump at $2/a_i'$ and so $\U_K(t)$ is not the Upsilon function of an $L$--space knot.
\end{proof}

\begin{figure}[ht]
    \centering
    \captionsetup[subfigure]{labelformat=empty}
    \begin{subfigure}{0.45\textwidth}
        \centering
        \includegraphics[width=0.9\textwidth]{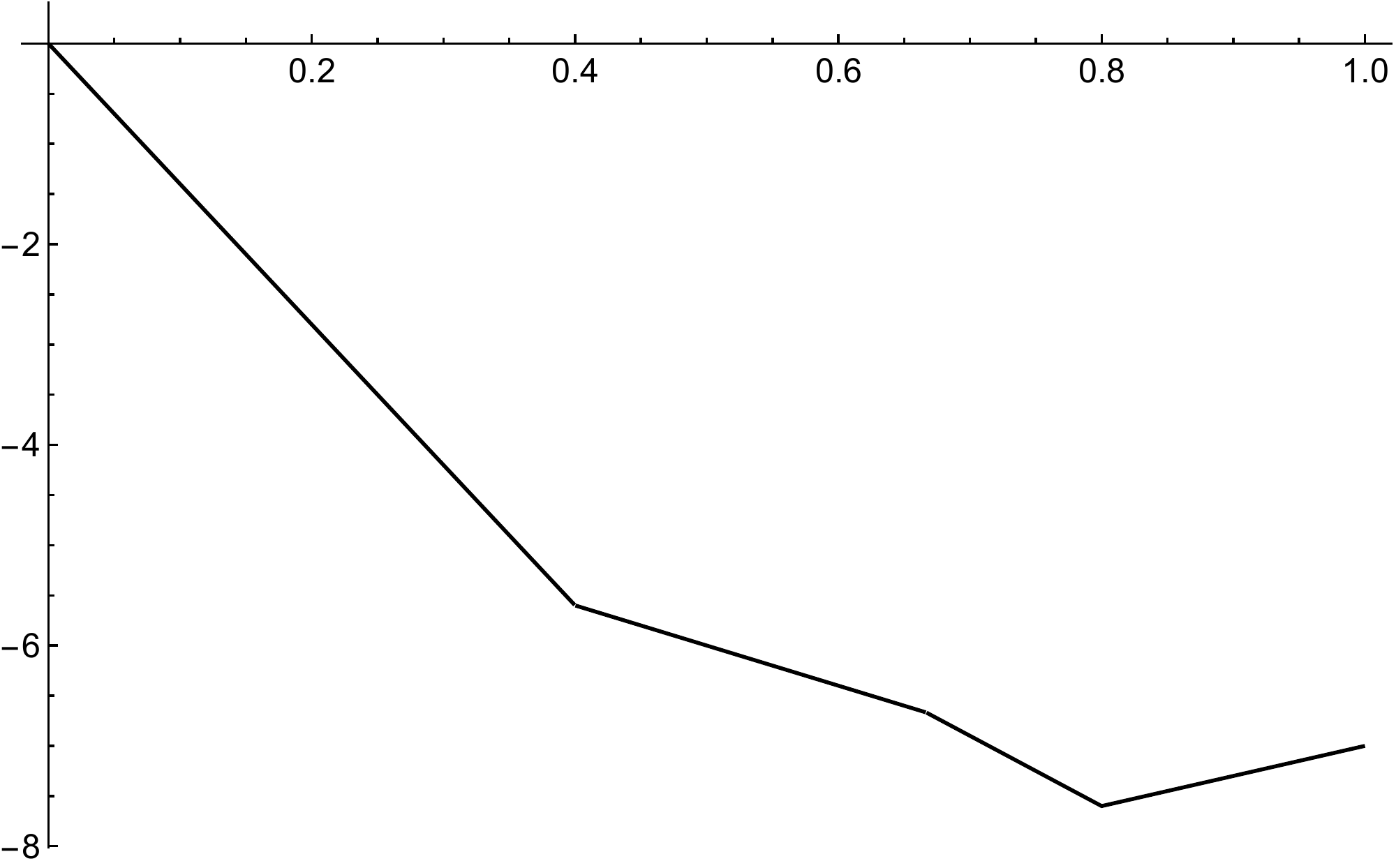} 
        \caption{$\U_K(t)$}
    \end{subfigure}
    \begin{subfigure}{0.45\textwidth}
        \centering
        \includegraphics[width=0.9\textwidth]{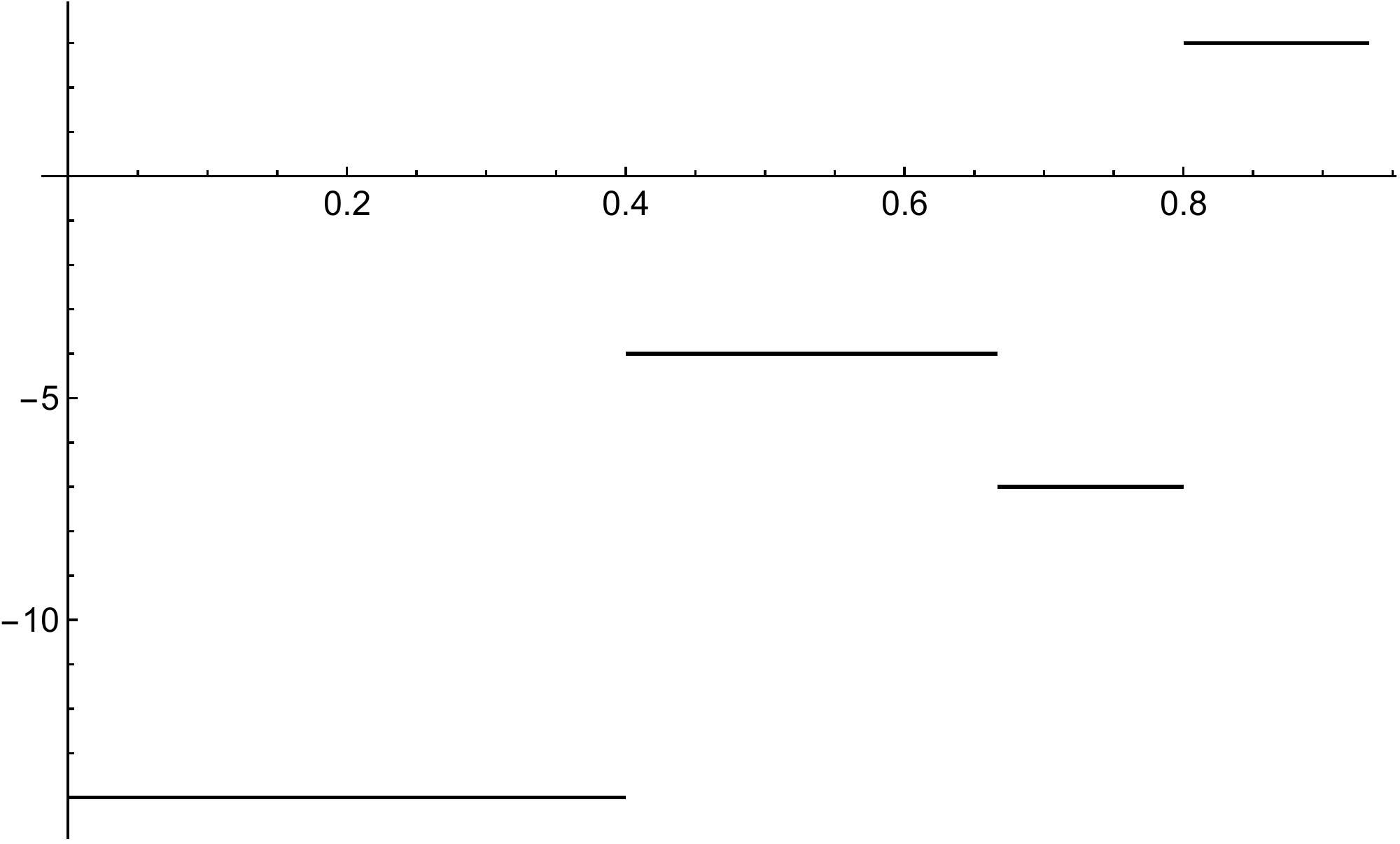}
        \caption[]{$\U'_K(t)$}
    \end{subfigure}
    \caption{The Upsilon function and its derivative for the knot $K = 3T(5,6)\,\#\, {-T(2,5)}\,\#\, {-T(3,5)}$ on the interval $[0,1]$.}
    \label{ups-sum}
\end{figure}

\begin{example}
Consider the knot $K = 3T(5,6)\,\#\, {-T(2,5)}\,\#\, {-T(3,5)}$.  Applying Theorem \ref{feller-krcatovich}, we have that 
$$\U_K(t) = 3\U_{T(5,6)}(t)-\U_{T(3,4)}(t) -3\U_{T(2,3)}(t).$$
Note that $3$ does not divide $5$.  In Figure \ref{ups-sum}, we see that $\U'_K(t)$ is not an increasing function---it has a negative jump at $t =2/3$ which is a jump in the derivative of the Upsilon function of $T(3,4)$ but not in that of $T(5,6)$.
\end{example}

\bibliography{mybiblio}

\begin{thebibliography}{10}

\bibitem{aceto-alfieri}
P.~Aceto and A.~Alfieri.
\newblock On sums of torus knots concordant to alternating knots.
\newblock {\em Bull. Lond. Math. Soc.}, 51(2):327--343, 2019.

\bibitem{aceto-celoria-park}
P.~Aceto, D.~Celoria, and J.~Park.
\newblock Rational cobordisms and integral homology.
\newblock \url{arxiv.org/abs/1811.01433}.

\bibitem{alfieri}
A.~Alfieri.
\newblock Upsilon type concordance invariants.
\newblock \url{arxiv.org/abs/1709.01594}.

\bibitem{alfieri-kang-stipsicz}
A.~Alfieri, S.~Kang, and A.~I. Stipsicz.
\newblock Connected {F}loer homology of covering involutions.
\newblock \url{arxiv.org/abs/1905.11613}.

\bibitem{borodzik-livingston}
M.~Borodzik and C.~Livingston.
\newblock Heegaard {F}loer homology and rational cuspidal curves.
\newblock {\em Forum Math. Sigma}, 2:e28, 23, 2014.

\bibitem{feller-krcatovich}
P.~Feller and D.~Krcatovich.
\newblock On cobordisms between knots, braid index, and the
  {U}psilon-invariant.
\newblock {\em Math. Ann.}, 369(1-2):301--329, 2017.

\bibitem{friedl-livingston-zentner}
S.~Friedl, C.~Livingston, and R.~Zentner.
\newblock Knot concordances and alternating knots.
\newblock {\em Michigan Math. J.}, 66(2):421--432, 2017.

\bibitem{hedden-watson}
M.~Hedden and L.~Watson.
\newblock On the geography and botany of knot {F}loer homology.
\newblock {\em Selecta Mathematica}, Aug 2017.

\bibitem{kirby-lickorish}
R.~C. Kirby and W.~B.~R. Lickorish.
\newblock Prime knots and concordance.
\newblock {\em Math. Proc. Cambridge Philos. Soc.}, 86(3):437--441, 1979.

\bibitem{krcatovich}
D.~Krcatovich.
\newblock The reduced knot {F}loer complex.
\newblock {\em Topology Appl.}, 194:171--201, 2015.

\bibitem{litherland}
R.~A. Litherland.
\newblock Signatures of iterated torus knots.
\newblock In {\em Topology of low-dimensional manifolds ({P}roc. {S}econd
  {S}ussex {C}onf., {C}helwood {G}ate, 1977)}, volume 722 of {\em Lecture Notes
  in Math.}, pages 71--84. Springer, Berlin, 1979.

\bibitem{livingston17}
C.~Livingston.
\newblock Notes on the knot concordance invariant {U}psilon.
\newblock {\em Algebr. Geom. Topol.}, 17(1):111--130, 2017.

\bibitem{livingston18}
C.~Livingston.
\newblock Concordances from connected sums of torus knots to {$L$}-space knots.
\newblock {\em New York J. Math.}, 24:233--239, 2018.

\bibitem{myers}
R.~Myers.
\newblock Homology cobordisms, link concordances, and hyperbolic
  {$3$}-manifolds.
\newblock {\em Trans. Amer. Math. Soc.}, 278(1):271--288, 1983.

\bibitem{os2}
P.~Ozsv\'ath and Z.~Szab\'o.
\newblock Knot {F}loer homology and the four-ball genus.
\newblock {\em Geom. Topol.}, 7:615--639, 2003.

\bibitem{os4}
P.~Ozsv\'ath and Z.~Szab\'o.
\newblock Holomorphic disks and topological invariants for closed
  three-manifolds.
\newblock {\em Ann. of Math. (2)}, 159(3):1027--1158, 2004.

\bibitem{os3}
P.~Ozsv\'ath and Z.~Szab\'o.
\newblock On knot {F}loer homology and lens space surgeries.
\newblock {\em Topology}, 44(6):1281--1300, 2005.

\bibitem{oss}
P.~S. Ozsv\'ath, A.~I. Stipsicz, and Z.~Szab\'o.
\newblock Concordance homomorphisms from knot {F}loer homology.
\newblock {\em Adv. Math.}, 315:366--426, 2017.

\bibitem{rolfsen}
D.~Rolfsen.
\newblock {\em Knots and links}.
\newblock Publish or Perish, Inc., Berkeley, Calif., 1976.
\newblock Mathematics Lecture Series, No. 7.

\bibitem{wall}
C.~T.~C. Wall.
\newblock {\em Singular points of plane curves}, volume~63 of {\em London
  Mathematical Society Student Texts}.
\newblock Cambridge University Press, Cambridge, 2004.

\bibitem{yozgyur}
R.~Yozgyur.
\newblock Knots not concordant to {L}-space knots.
\newblock \url{arxiv.org/abs/1905.02461}.

\bibitem{Zemke}
I.~Zemke.
\newblock Connected sums and involutive knot {F}loer homology.
\newblock \url{arxiv.org/abs/1705.01117}.

\end{thebibliography}
\bibliographystyle{abbrv} 

\end{document}